\newcounter{TmpEnumi}
\numberwithin{equation}{section}
\def\today{\number\day\space\ifcase\month\or   January\or February\or
   March\or April\or May\or June\or   July\or August\or September\or
   October\or November\or December\fi\   \number\year}
\theoremstyle{definition}
\newtheorem{thm}{Theorem}[section]
\newtheorem{lem}[thm]{Lemma}
\newtheorem{prp}[thm]{Proposition}
\newtheorem{dfn}[thm]{Definition}
\newtheorem{cor}[thm]{Corollary}
\newtheorem{rmk}[thm]{Remark}
\newtheorem{ntn}[thm]{Notation}
\newtheorem{exa}[thm]{Example}
\newcommand{\beq}{\begin{equation}}
\newcommand{\eeq}{\end{equation}}
\newcommand{\beqr}{\begin{eqnarray*}}
\newcommand{\eeqr}{\end{eqnarray*}}
\newcommand{\bal}{\begin{align*}}
\newcommand{\eal}{\end{align*}}
\newcommand{\bei}{\begin{itemize}}
\newcommand{\eei}{\end{itemize}}
\newcommand{\af}{\alpha}
\newcommand{\bt}{\beta}
\newcommand{\ld}{\lambda}
\newcommand{\ph}{\varphi}
\newcommand{\Ld}{\Lambda}
\newcommand{\Z}{{\mathbb{Z}}}
\newcommand{\C}{{\mathbb{C}}}
\newcommand{\N}{{\mathbb{Z}}_{> 0}}
\newcommand{\id}{{\operatorname{id}}}
\newcommand{\card}{{\operatorname{card}}}
\newcommand{\Aut}{{\operatorname{Aut}}}
\newcommand{\Ad}{{\operatorname{Ad}}}
\newcommand{\cC}{{\mathcal{C}}}
\newcommand{\cP}{{\mathcal{P}}}
\newcommand{\tfae}{the following are equivalent}
\newcommand{\ifo}{if and only if}
\newcommand{\ca}{C*-algebra}
\newcommand{\fd}{finite dimensional}
\newcommand{\pj}{projection}
\newcommand{\cp}{crossed product}
\newcommand{\ssa}{separable unital strongly selfabsorbing}
\renewcommand{\S}{\subset}
\newcommand{\I}{\infty}
\newcommand{\E}{\varnothing}
\newcommand{\LI}[2]{{\operatorname{I}}_{#1} (#2)}
\title[Crossed products and fixed point algebras]{Relating
  properties of crossed products to those of
  fixed point algebras}
\author{Cornel Pasnicu and N.~Christopher Phillips}
\date{30~August 2017}
\address{Department of Mathematics,
      The University of Texas at San Antonio,
      San Antonio TX 78249, USA.}
\address{Department of Mathematics, University  of Oregon,
       Eugene OR 97403-1222, USA.}
\subjclass[2010]{Primary 46L55;
 Secondary 46L05.}
\thanks{This material is based upon work supported by the
  US National Science Foundation under
  Grant DMS-1501144.}
\begin{document}

\begin{abstract}
For a number of properties of C*-algebras,
including real rank zero, stable rank one,
pure infiniteness,
residual hereditary infiniteness,
the combination of pure infiniteness and the ideal property,
the property of being an AT~algebra with real rank zero,
and $D$-stability for a \ssa{} \ca~$D$,
we prove the following.
Let $A$ be a separable \ca,
let $G$ be a second countable compact abelian group,
and let $\af \colon G \to \Aut (A)$
be any action of $G$ on~$A$.
Then the fixed point algebra $A^{\af}$ has the given property
if and only if
the crossed product $C^* (G, A, \af)$ has the same property.
\end{abstract}

\maketitle

\indent
Let $(P)$ be a property that a \ca{} may of may not have,
such as real rank zero, satisfying the Universal Coefficient Theorem,
the weak ideal property,
or being $D$-stable for a \ssa{} \ca~$D$.
Let $G$ be a compact abelian group,
and let $\af \colon G \to \Aut (A)$
be an action of $G$ on a \ca~$A$.
(No freeness conditions are assumed on the action.)
Suppose the fixed point algebra $A^{\af}$ has the property $(P)$.
Does it follow that the \cp{} $C^* (G, A, \af)$ has $(P)$?
In this paper,
assuming separability conditions,
we give positive answers for a number of choices of $(P)$,
including real rank zero, stable rank one,
pure infiniteness,
residual hereditary infiniteness,
the combination of pure infiniteness and the ideal property,
the property of being an AT~algebra with real rank zero,
and $D$-stability for a \ssa{} \ca~$D$.
See Proposition~\ref{P_7809_IsMaxx},
Remark~\ref{R_7809_ResidHer},
and Lemma~\ref{L_7812_UCT}
for a list of the properties we consider.
For type~I \ca{s},
this result
(without $G$ being abelian and without separability)
is in Theorem~3.2 of~\cite{GtLz3}.

This situation is easier,
and holds for many more properties,
than showing that if $A$ has $(P)$
then $C^* (G, A, \af)$ has $(P)$.

The reverse direction,
showing that if $C^* (G, A, \af)$ has $(P)$
then $A^{\af}$ has $(P)$,
is easy for the properties we consider:
they pass to corners,
and $A^{\af}$ is isomorphic to a corner in $C^* (G, A, \af)$,
as in~\cite{Rs}.
The question above will usually be easy when
$A^{\af}$ is isomorphic to a full corner in $C^* (G, A, \af)$,
which is expected in the presence of mild freeness conditions.
In general $A^{\af}$ need not be full,
but in some sense the parts of $C^* (G, A, \af)$
not in the ideal generated by the image of $A^{\af}$
look like those that are in this ideal.
One may consider the following examples,
in which $G = \Z / 2 \Z$ with nontrivial element $g_0 \in G$.
\begin{enumerate}
\item\label{E_7811_Rep_Triv}
$A = \C$ and $\af \colon G \to \Aut (A)$
is determined by $\af_{g_0} = \id_{\C}$.
Then $A^{\af} = \C$ and $C^* (G, A, \af) \cong \C \oplus \C$.
The isomorphism of~\cite{Rs} identifies $A^{\af}$ with one of the
summands in $C^* (G, A, \af)$.
It is not full.
\item\label{E_7811_Rep_Rokhlin}
$B$ is the $2^{\infty}$~UHF algebra and $\bt \colon G \to \Aut (B)$
is determined by
$\bt_{g_0} = \bigotimes_{n = 1}^{\I}
 \Ad \left( \left( \begin{smallmatrix}
  1     &  0        \\
  0     &  -1
\end{smallmatrix} \right) \right)$.
One can check that $B^{\bt} \cong C^* (G, B, \bt) \cong B$,
and that the isomorphism of~\cite{Rs} identifies $B^{\bt}$
with a (full) corner in $C^* (G, B, \bt)$
given by a \pj{} with trace~$\frac{1}{2}$.
\end{enumerate}

The properties we consider share the following characteristics,
which allow them to be treated all at once.
They pass to ideals,
they pass to stably isomorphic \ca{s},
and every \ca{} has a largest ideal with the given property.
The key technical point is that in a crossed product by
an abelian group,
this largest ideal must be invariant under the dual action.

In Section~\ref{Sec_Def},
we give the definitions
and some general theory.
In Section~\ref{Sec_Max},
we verify that a number of properties of \ca{s}
fit in our framework.
Section~\ref{Sec_Consq}
contains the main result,
relating the fixed point algebra to the crossed product.
It is interesting
primarily for properties which are not preserved
by crossed products.
For those that are,
we relate the largest ideal with a given property in an algebra
to the largest ideal in the crossed product
with the same property.

As usual,
$K$ denotes the algebra of compact operators
on a separable in\fd{} Hilbert space.
All ideals in \ca{s} are assumed to be two sided and closed.

\section{Largest ideals}\label{Sec_Def}

\indent
In this section,
we define various conditions which may hold for a
property of \ca{s}.
Most are known or obvious,
and are included for completeness.
The new one is admitting largest ideals
(Definition \ref{D_7809_Props}(\ref{D_7809_Props_Max})).
We then give several results showing that other conditions
imply that a property admits largest ideals.
We need several of these conditions later.
At the end of the section,
we show that, under extra hypotheses,
largest ideals with a given property behave as expected.

\begin{dfn}\label{D_7809_Props}
Let $(P)$ be a property that a \ca{} may of may not have
(such as real rank zero, satisfying the Universal Coefficient Theorem,
or the weak ideal property).
\begin{enumerate}
\item\label{D_7809_Props_Iso}
We say that $(P)$ is {\emph{isomorphism invariant}}
if whenever $A$ and $B$ are \ca{s},
$A$ has the property~$(P)$,
and $B \cong A$, then $B$ has the property~$(P)$.
\item\label{D_7809_Props_Stable}
We say that $(P)$ is {\emph{(separably) stable}}
if whenever $A$ and $B$ are (separable) \ca{s},
$A$ has the property~$(P)$,
and $K \otimes B \cong K \otimes A$, then $B$ has the property~$(P)$.
\item\label{D_7809_Props_Ideals}
We say that $(P)$ {\emph{(separably) passes to ideals}}
if whenever $A$ is a (separable) \ca{}
with~$(P)$
and $I \subseteq A$ is an ideal,
then $I$ has the property~$(P)$.
\item\label{D_7809_Props_Max}
We say that $(P)$ {\emph{(separably) admits largest ideals}}
if for every (separable) \ca~$A$
there is a largest ideal in~$A$
which has the property~$(P)$.
\end{enumerate}
\end{dfn}

More explicitly,
a property $(P)$ admits largest ideals
if for every \ca~$A$
there is an ideal $I \subseteq A$
which has~$(P)$,
and such that whenever $J \subseteq A$ is an ideal with~$(P)$,
then $J \subseteq I$.

\begin{ntn}\label{N_7809_MaxI}
Let $(P)$ be a property which (separably) admits largest ideals.
For a (separable) \ca~$A$,
we denote by $\LI{P}{A}$
the largest ideal in~$A$ which has~$(P)$.
\end{ntn}

We now give three progressively stronger sets of conditions
which imply that a property admits largest ideals.
For some later results (not the main result),
we will need to assume one of these stronger sets of conditions.

\begin{lem}\label{L_7809_CrtForMax}
Let $(P)$ be a property of \ca{s}.
Assume that:
\begin{enumerate}
\item\label{L_7809_CrtForMax_Z}
The zero \ca{} has~$(P)$.
\item\label{L_7809_CrtForMax_Sum}
Whenever $A$ is a \ca{} and $I, J \subseteq A$ are ideals with~$(P)$,
then there is an ideal $L \subseteq A$ which has~$(P)$
and such that $I, J \subseteq L$.
\item\label{L_7809_CrtForMax_Dir}
Whenever $A$ is a \ca,
$\Ld$ is a directed set,
and $(I_{\ld})_{\ld \in \Ld}$ is a family of ideals in~$A$ with~$(P)$
such that $I_{\ld} \subseteq I_{\mu}$
for all $\ld, \mu \in \Ld$
with $\ld \leq \mu$,
then there is an ideal $L \subseteq A$ which has~$(P)$
and such that $I_{\ld} \subseteq L$ for all $\ld \in \Ld$.
\end{enumerate}
Then $(P)$ admits largest ideals.
\end{lem}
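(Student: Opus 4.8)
The plan is to show that, for a fixed \ca~$A$, the collection
\[
\cS = \{\, I : I \text{ is an ideal of } A \text{ having } (P) \,\},
\]
partially ordered by inclusion, has a greatest element; this element will be $\LI{P}{A}$. The key observation is that hypotheses~(\ref{L_7809_CrtForMax_Z}) and~(\ref{L_7809_CrtForMax_Sum}) say precisely that $\cS$ is a nonempty directed set: by~(\ref{L_7809_CrtForMax_Z}) the zero ideal lies in~$\cS$, so $\cS \neq \E$; and by~(\ref{L_7809_CrtForMax_Sum}), given $I, J \in \cS$ there is $L \in \cS$ with $I \S L$ and $J \S L$.

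Next I would invoke hypothesis~(\ref{L_7809_CrtForMax_Dir}) with $\Ld = \cS$ (ordered by inclusion) and the family $(I_{\ld})_{\ld \in \cS}$ defined by $I_{\ld} = \ld$. This family consists of ideals of~$A$ having~$(P)$, and it is monotone in the required sense tautologically, since $\ld \leq \mu$ in $\cS$ means exactly $\ld \S \mu$. Hence~(\ref{L_7809_CrtForMax_Dir}) produces an ideal $L_0 \S A$ which has~$(P)$ and satisfies $\ld \S L_0$ for every $\ld \in \cS$. Now $L_0$ is itself an ideal of~$A$ with~$(P)$, so $L_0 \in \cS$, and the previous sentence shows $L_0$ contains every member of~$\cS$. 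Thus $L_0$ is the greatest element of~$\cS$: it has~$(P)$, and every ideal $J \S A$ with~$(P)$ is an element of~$\cS$ and hence satisfies $J \S L_0$. This is exactly the statement that $(P)$ admits largest ideals, with $\LI{P}{A} = L_0$. For the separable form of the statement nothing changes, since an ideal of a separable \ca{} is separable, so the separable versions of~(\ref{L_7809_CrtForMax_Z}), (\ref{L_7809_CrtForMax_Sum}), and~(\ref{L_7809_CrtForMax_Dir}) apply to exactly the ideals used above.

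There is no real obstacle here; the whole content is recognizing~(\ref{L_7809_CrtForMax_Z}) and~(\ref{L_7809_CrtForMax_Sum}) as asserting that $\cS$ is a nonempty directed set, so that~(\ref{L_7809_CrtForMax_Dir}), applied to $\cS$ itself, immediately yields a top element. One could instead run a Zorn's Lemma argument --- chains in~$\cS$ are directed, so~(\ref{L_7809_CrtForMax_Dir}) furnishes upper bounds and hence a maximal element~$I_0$, which is then seen to be the greatest element using~(\ref{L_7809_CrtForMax_Sum}) --- but this introduces an unnecessary appeal to the axiom of choice. The only points requiring any care are that $\cS$ is a genuine set (it is a collection of subsets of~$A$) and that hypothesis~(\ref{L_7809_CrtForMax_Dir}) is the one to use for directed families, whereas the weaker~(\ref{L_7809_CrtForMax_Sum}) is what upgrades ``maximal'' to ``largest''.
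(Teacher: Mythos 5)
Your proof is correct, but it takes a genuinely different route from the paper's. The paper runs a Zorn's Lemma argument: hypothesis~(\ref{L_7809_CrtForMax_Z}) makes the poset $\cP$ of ideals with~$(P)$ nonempty, hypothesis~(\ref{L_7809_CrtForMax_Dir}) (applied only to chains, which are in particular directed) gives upper bounds for totally ordered subsets, Zorn produces a maximal element $M$, and hypothesis~(\ref{L_7809_CrtForMax_Sum}) then upgrades ``maximal'' to ``largest''. You instead observe that (\ref{L_7809_CrtForMax_Z}) and~(\ref{L_7809_CrtForMax_Sum}) already make $\cP$ itself a nonempty directed set, and feed the tautological family $(I)_{I \in \cP}$ into hypothesis~(\ref{L_7809_CrtForMax_Dir}) in its full strength (arbitrary directed index sets, not just chains); the resulting ideal $L_0$ has~$(P)$ and contains every member of~$\cP$, so it is the greatest element outright. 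Your version is shorter and avoids the appeal to the axiom of choice, at the cost of using hypothesis~(\ref{L_7809_CrtForMax_Dir}) for a possibly very large directed set rather than just for chains; since the later applications of the lemma all verify~(\ref{L_7809_CrtForMax_Dir}) for arbitrary directed families anyway, nothing is lost. Your closing remark about the separable case is harmless but not needed, as the lemma as stated is the non-separable version (the paper handles the separable variant separately, with a countability argument, precisely because there it only assumes the directed-family hypothesis for increasing sequences).
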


\begin{proof}
Let $A$ be a \ca{}
and let $\cP$ be the collection of all ideals in $A$
which have~$(P)$,
ordered by inclusion.
Condition~(\ref{L_7809_CrtForMax_Z}) implies that
$\cP \neq \E$.
Condition~(\ref{L_7809_CrtForMax_Dir}) implies that
every totally ordered subset of $\cP$
has an upper bound.
So Zorn's Lemma provides a maximal element $M$ of~$\cP$.
Now let $I \in \cP$.
Then condition~(\ref{L_7809_CrtForMax_Sum})
provides an ideal $L \in \cP$
such that $M \subseteq L$ and $I \subseteq L$.
By maximality, $L = M$, so $I \subseteq M$.
Thus $M$ is the largest element of~$\cP$.
\end{proof}

\begin{cor}\label{C_7809_StCtForMaxx}
Let $(P)$ be a property of \ca{s}.
Assume that:
\begin{enumerate}
\item\label{C_7809_StCtForMaxx_Z}
The zero \ca{} has~$(P)$.
\item\label{C_7809_StCtForMaxx_Sum}
Whenever $A$ is a \ca{} and $I, J \subseteq A$ are ideals with~$(P)$,
then $I + J$ has~$(P)$.
\item\label{C_7809_StCtForMaxx_Dir}
Whenever $A$ is a \ca,
$\Ld$ is a directed set,
and $(I_{\ld})_{\ld \in \Ld}$ is a family of ideals in~$A$ with~$(P)$
such that $I_{\ld} \subseteq I_{\mu}$
for all $\ld, \mu \in \Ld$
with $\ld \leq \mu$,
then ${\overline{\bigcup_{\ld \in \Ld} I_{\ld} }}$ has~$(P)$.
\end{enumerate}
Then $(P)$ admits largest ideals.
\end{cor}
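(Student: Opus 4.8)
The plan is to derive \Cref{C_7809_StCtForMaxx} from \Lem{L_7809_CrtForMax}, which is immediate once we observe that each of the three hypotheses of the corollary is a special case of the corresponding hypothesis of the lemma. Concretely, hypothesis~(\ref{C_7809_StCtForMaxx_Z}) is literally hypothesis~(\ref{L_7809_CrtForMax_Z}). For hypothesis~(\ref{L_7809_CrtForMax_Sum}), given ideals $I, J \subseteq A$ with~$(P)$, we take $L = I + J$; this is again an ideal of~$A$ (the algebraic sum of two closed ideals in a \ca{} is automatically closed, being equal to the closed ideal generated by $I \cup J$), it has~$(P)$ by hypothesis~(\ref{C_7809_StCtForMaxx_Sum}), and it obviously contains both $I$ and~$J$. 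For hypothesis~(\ref{L_7809_CrtForMax_Dir}), given an upward directed family $(I_{\ld})_{\ld \in \Ld}$ of ideals with~$(P)$, we take $L = {\overline{\bigcup_{\ld \in \Ld} I_{\ld}}}$; this is an ideal of~$A$ (an increasing union of ideals is an ideal, and its closure is a closed ideal), it has~$(P)$ by hypothesis~(\ref{C_7809_StCtForMaxx_Dir}), and it contains each $I_{\ld}$.

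Having checked that all three hypotheses of \Lem{L_7809_CrtForMax} hold, we simply invoke that lemma to conclude that $(P)$ admits largest ideals. The separable version is handled verbatim by the same argument: every ideal of a separable \ca{} is separable, the sum of two separable ideals is separable, and the closure of a countable increasing union of separable ideals is separable, so one may restrict throughout to separable \ca{s} and separable ideals and apply the separable form of \Lem{L_7809_CrtForMax}.

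The only point requiring any care is the closedness of $I + J$, and this is a standard fact: if $I$ and $J$ are closed ideals in a \ca~$A$, then $I + J$ is closed, since one can write $I + J = \overline{I + J}$ by an approximate-identity argument (for instance, $I + J$ is the image of the \ca{} $I \oplus J$ under the \hm{} $(a, b) \mapsto a + b$, and images of \ca{s} under $*$-\hm{s} are closed). So there is no real obstacle here; the content of the corollary is entirely in \Lem{L_7809_CrtForMax}, and the corollary is just the convenient repackaging in which the sum and directed-union constructions are performed explicitly rather than merely asserted to exist.
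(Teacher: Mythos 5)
Your proposal is correct and is exactly the paper's argument: the paper's proof of this corollary is the one-line observation that the hypotheses imply those of Lemma~\ref{L_7809_CrtForMax}, with $L = I + J$ and $L = {\overline{\bigcup_{\ld} I_{\ld}}}$ as the required ideals. Your added remarks on the closedness of $I+J$ and the separable variant are correct standard facts, just made explicit.
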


\begin{proof}
The conditions imply those of Lemma~\ref{L_7809_CrtForMax}.
\end{proof}

\begin{cor}\label{C_7809_ExtCtForMaxx}
Let $(P)$ be a property of \ca{s}.
Assume that:
\begin{enumerate}
\item\label{C_7809_ExtStCtForMaxx_Z}
The zero \ca{} has~$(P)$.
\item\label{C_7809_ExtStCtForMaxx_Quot}
Quotients of algebras with~$(P)$ have~$(P)$.
\item\label{C_7809_ExtStCtForMaxx_Ext}
Extensions of algebras with~$(P)$ have~$(P)$.
\item\label{C_7809_ExtStCtForMaxx_Dir}
Whenever $A$ is a \ca,
$\Ld$ is a directed set,
and $(I_{\ld})_{\ld \in \Ld}$ is a family of ideals in~$A$ with~$(P)$
such that $I_{\ld} \subseteq I_{\mu}$
for all $\ld, \mu \in \Ld$
with $\ld \leq \mu$,
then ${\overline{\bigcup_{\ld \in \Ld} I_{\ld} }}$ has~$(P)$.
\end{enumerate}
Then $(P)$ admits largest ideals.
\end{cor}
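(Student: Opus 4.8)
The plan is to show that conditions (\ref{C_7809_ExtStCtForMaxx_Z})--(\ref{C_7809_ExtStCtForMaxx_Dir}) here imply conditions (\ref{C_7809_CrtForMax_Z})--(\ref{C_7809_CrtForMax_Dir}) of Lemma~\ref{L_7809_CrtForMax}, from which the conclusion is immediate. Condition (\ref{C_7809_ExtStCtForMaxx_Z}) is literally condition (\ref{L_7809_CrtForMax_Z}), and condition (\ref{C_7809_ExtStCtForMaxx_Dir}) is literally the hypothesis of (\ref{L_7809_CrtForMax_Dir}) with the explicit witness $L = \overline{\bigcup_{\ld \in \Ld} I_{\ld}}$. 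So the only real work is to deduce (\ref{L_7809_CrtForMax_Sum}) from quotient- and extension-closedness.

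For the pairwise statement, let $A$ be a \ca{} and let $I, J \subseteq A$ be ideals with~$(P)$. I would take $L = I + J$, which is again an ideal of~$A$ (the sum of two closed ideals in a \ca{} is closed). The standard identification $(I + J)/J \cong I/(I \cap J)$ exhibits $(I+J)/J$ as a quotient of~$I$, hence it has~$(P)$ by (\ref{C_7809_ExtStCtForMaxx_Quot}). Now $L = I + J$ sits in the extension
\beq\label{Eq_Cor_Ext}
0 \longrightarrow J \longrightarrow I + J \longrightarrow (I+J)/J \longrightarrow 0,
\eeq
in which $J$ has~$(P)$ by hypothesis and $(I+J)/J$ has~$(P)$ by the previous sentence. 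By (\ref{C_7809_ExtStCtForMaxx_Ext}), $L = I + J$ has~$(P)$, and clearly $I, J \subseteq L$. This verifies condition (\ref{L_7809_CrtForMax_Sum}).

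The main (minor) obstacle is purely bookkeeping: one must be slightly careful that the ``extension'' hypothesis (\ref{C_7809_ExtStCtForMaxx_Ext}) is being applied to an honest short exact sequence of \ca{s} — i.e.\ that $J$ is genuinely an ideal of $I+J$ (it is, being an ideal of the larger algebra~$A$) and that the quotient is what we claimed. Once (\ref{L_7809_CrtForMax_Sum}) and (\ref{L_7809_CrtForMax_Dir}) are in hand, Lemma~\ref{L_7809_CrtForMax} applies verbatim and gives the existence of largest ideals.
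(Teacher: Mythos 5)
Your proof is correct and is essentially the paper's argument: the paper also verifies the sum condition via the short exact sequence $0 \to I \to I + J \to J/(I \cap J) \to 0$ (the mirror image of yours, with the roles of $I$ and $J$ swapped) and then invokes Corollary~\ref{C_7809_StCtForMaxx}, which in turn rests on Lemma~\ref{L_7809_CrtForMax} exactly as you do directly.
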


\begin{proof}
It suffices to verify condition~(\ref{C_7809_StCtForMaxx_Sum})
of Corollary~\ref{C_7809_StCtForMaxx}.
So let $I, J \subseteq A$ be ideals with $(P)$.
Using the
short exact sequence
\[
0 \longrightarrow I
\longrightarrow I + J
\longrightarrow J / (I \cap J)
\longrightarrow 0
\]
and hypotheses (\ref{C_7809_ExtStCtForMaxx_Quot})
and (\ref{C_7809_ExtStCtForMaxx_Ext}),
it follows that $I + J$ has $(P)$.
\end{proof}

We need the separable versions
of Corollary~\ref{C_7809_StCtForMaxx}
and Corollary~\ref{C_7809_ExtCtForMaxx}.

\begin{lem}\label{L_7813_ExtCtForDSepMaxx}
Let $(P)$ be a property of \ca{s}.
Assume that:
\begin{enumerate}
\item\label{L_7813_ExtCtForDSepMaxx_Z}
The zero \ca{} has~$(P)$.
\item\label{L_7813_ExtCtForDSepMaxx_Sum}
Whenever $A$ is a separable \ca{}
and $I, J \subseteq A$ are ideals with~$(P)$,
then $I + J$ has~$(P)$.
\item\label{L_7813_ExtCtForDSepMaxx_Dir}
Whenever $A$ is a separable \ca{}
and $I_0 \subseteq I_1 \subseteq \cdots \subseteq A$
is a sequence of ideals with~$(P)$,
then ${\overline{\bigcup_{n = 0}^{\I} I_{n} }}$ has~$(P)$.
\end{enumerate}
Then $(P)$ separably admits largest ideals.
\end{lem}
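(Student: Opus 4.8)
The plan is to reduce \Lem{L_7813_ExtCtForDSepMaxx} to \Lem{L_7809_CrtForMax} by constructing, for each separable \ca~$A$, a single countable increasing sequence of ideals with $(P)$ that is cofinal among all ideals with $(P)$. The key point is that in a separable \ca{} every collection of ideals, and more generally every directed family of ideals, has a cofinal countable increasing subfamily, because the lattice of ideals of a separable \ca{} satisfies a countable chain condition: the topology on $\Prim(A)$ is second countable, and closed-union-stable families of open sets in a second countable space admit countable cofinal increasing subfamilies.

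First I would verify hypothesis~(\ref{L_7809_CrtForMax_Z}) of \Lem{L_7809_CrtForMax}: this is immediate from hypothesis~(\ref{L_7813_ExtCtForDSepMaxx_Z}). Hypothesis~(\ref{L_7809_CrtForMax_Sum}) for separable $A$ follows directly from hypothesis~(\ref{L_7813_ExtCtForDSepMaxx_Sum}) by taking $L = I + J$. The work is in hypothesis~(\ref{L_7809_CrtForMax_Dir}): given a directed family $(I_{\ld})_{\ld \in \Ld}$ of ideals with $(P)$ in a separable \ca~$A$, I must produce an ideal $L \subseteq A$ with $(P)$ containing all the $I_{\ld}$. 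Set $J = {\overline{\bigcup_{\ld \in \Ld} I_{\ld}}}$, which is an ideal in~$A$ (the closed linear span of an upward-directed union of ideals is again an ideal). Since $A$ is separable, so is $J$, and hence $J$ has a countable dense subset $(x_n)_{n \in \N}$. For each~$n$ choose $\ld_n \in \Ld$ and $y_n \in I_{\ld_n}$ with $\| x_n - y_n \| < \frac{1}{n}$. Using directedness of $\Ld$, build an increasing sequence $\mu_0 \leq \mu_1 \leq \cdots$ in $\Ld$ with $\ld_n \leq \mu_n$ for all~$n$; then $I_{\mu_0} \subseteq I_{\mu_1} \subseteq \cdots$ is an increasing sequence of ideals with $(P)$, and each $y_n$ lies in $I_{\mu_n}$, so $(x_n)_{n \in \N} \subseteq {\overline{\bigcup_{n = 0}^{\I} I_{\mu_n}}}$. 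Since the $x_n$ are dense in~$J$, this gives $J = {\overline{\bigcup_{n = 0}^{\I} I_{\mu_n}}}$. By hypothesis~(\ref{L_7813_ExtCtForDSepMaxx_Dir}), $J$ has $(P)$, and $I_{\ld} \subseteq J$ for all $\ld \in \Ld$, so $L = J$ works.

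Having checked all three hypotheses of the separable analogue of \Lem{L_7809_CrtForMax}, the separable version of that lemma's proof applies verbatim — replacing "\ca" by "separable \ca" throughout, noting that all the ideals constructed are automatically separable — and yields that for every separable $A$ there is a largest ideal with $(P)$; that is, $(P)$ separably admits largest ideals.

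\textbf{Main obstacle.} The only genuinely substantive point is the countability reduction in hypothesis~(\ref{L_7809_CrtForMax_Dir}): passing from an arbitrary directed family to a cofinal \emph{sequence}. The argument above uses separability of $A$ (hence of the closure of the union) to extract a countable dense set, approximate its elements from the $I_{\ld}$'s, and then cofinalize a countable index set inside $\Ld$ by directedness. One should take a little care that ${\overline{\bigcup_{\ld} I_{\ld}}}$ is indeed an ideal (closedness under the closed ideal structure is routine for an upward-directed union) and that the sequence $(\mu_n)$ can be chosen increasing; neither is difficult, but they are the steps that actually use the separability hypothesis that distinguishes this lemma from \Cor{C_7809_ExtCtForMaxx}.
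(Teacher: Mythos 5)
Your overall strategy is sound but is organized differently from the paper's. The paper does not route through \Lem{L_7809_CrtForMax} and Zorn's Lemma at all: it directly constructs the largest ideal by choosing a countable dense subset $\{a_n\}$ of $\bigcup_{I \in \cP} I$ (the union over \emph{all} ideals with~$(P)$), inductively forming $I_{n+1} = I_n + J$ with $a_{n+1} \in J \in \cP$, and observing that ${\overline{\bigcup_n I_n}}$ has~$(P)$ by hypothesis~(\ref{L_7813_ExtCtForDSepMaxx_Dir}) and contains every member of~$\cP$. Your version instead verifies the directed-family hypothesis of \Lem{L_7809_CrtForMax} by cofinalizing an arbitrary directed family to a sequence; this costs an extra appeal to Zorn's Lemma but establishes slightly more along the way (the closed union of \emph{any} directed family of ideals with~$(P)$ in a separable \ca{} again has~$(P)$). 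Both arguments exploit separability in the same essential way, so the two routes are close cousins rather than genuinely different methods.

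There is, however, one incorrect inference in your cofinalization step. From $y_n \in I_{\mu_n}$ and $\| x_n - y_n \| < \frac{1}{n}$ you conclude that $(x_n)_{n \in \N} \subseteq {\overline{\bigcup_{m} I_{\mu_m}}}$; this does not follow, since $x_n$ is only within distance $\frac{1}{n}$ of that closed set, which for a fixed small~$n$ says nothing. Nor does it follow that $\{ y_n \}$ is dense in~$J$: a point $x \in J$ might be well approximated only by terms $x_n$ with $n$ small, where the tolerance $\frac{1}{n}$ is large. The repair is immediate and is in effect what the paper does: since $J$ is separable, so is its subset $\bigcup_{\ld \in \Ld} I_{\ld}$, so you may choose the countable dense set $\{ x_n \}$ \emph{inside} $\bigcup_{\ld \in \Ld} I_{\ld}$, with $x_n \in I_{\ld_n}$; after cofinalizing, $\{ x_n \} \subseteq \bigcup_{m} I_{\mu_m}$ literally, whence $J = {\overline{\{ x_n \colon n \in \N \} }} \subseteq {\overline{\bigcup_{m} I_{\mu_m}}} \subseteq J$. (Alternatively, approximate each $x_n$ to within $\frac{1}{k}$ for every~$k$ and re-enumerate.) With this fix your proof is complete.
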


\begin{proof}
Let $\cP$ be the collection of all ideals in $A$
which have~$(P)$.
Choose a countable dense subset $\{ a_n \colon n \in \N \}$ of
$\bigcup_{I \in \cP} I$.
Inductively construct ideals $I_n \in \cP$ as follows.
Define $I_0 = \{ 0 \}$.
Given $I_n \in \cP$,
choose an ideal $J \in \cP$
such that $a_{n + 1} \in J$.
Then set $I_{n + 1} = I_n + J$,
which is in $\cP$ by hypothesis.

We have $I_0 \subseteq I_1 \subseteq \cdots \subseteq A$,
so the ideal $I = {\overline{\bigcup_{n = 0}^{\I} I_{n} }}$
is in $\cP$ by hypothesis.
Also
$L \in \cP$ implies
$L \subseteq {\overline{ \{ a_n \colon n \in \N \} }} \subseteq I$.
So $I$ is the largest ideal in $A$ which has~$(P)$.
\end{proof}

\begin{cor}\label{C_7826_Sep}
Let $(P)$ be a property of \ca{s}.
Assume that:
\begin{enumerate}
\item\label{C_7826_Sep_Z}
The zero \ca{} has~$(P)$.
\item\label{C_7826_Sep_Quot}
Quotients of separable algebras with~$(P)$ have~$(P)$.
\item\label{C_7826_Sep_Ext}
Extensions of separable algebras with~$(P)$ have~$(P)$.
\item\label{C_7826_Sep_Dir}
Whenever $A$ is a separable \ca{}
and $I_0 \subseteq I_1 \subseteq \cdots \subseteq A$
is a sequence of ideals with~$(P)$,
then ${\overline{\bigcup_{n = 0}^{\I} I_{n} }}$ has~$(P)$.
\end{enumerate}
Then $(P)$ separably admits largest ideals.
\end{cor}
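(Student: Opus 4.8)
The plan is to deduce Corollary~\ref{C_7826_Sep} from Lemma~\ref{L_7813_ExtCtForDSepMaxx}, exactly as Corollary~\ref{C_7809_ExtCtForMaxx} was deduced from Corollary~\ref{C_7809_StCtForMaxx}. That is, conditions~(\ref{C_7826_Sep_Z}) and~(\ref{C_7826_Sep_Dir}) are literally conditions~(\ref{L_7813_ExtCtForDSepMaxx_Z}) and~(\ref{L_7813_ExtCtForDSepMaxx_Dir}) of Lemma~\ref{L_7813_ExtCtForDSepMaxx}, so it suffices to verify condition~(\ref{L_7813_ExtCtForDSepMaxx_Sum}): if $A$ is a separable \ca{} and $I, J \subseteq A$ are ideals with $(P)$, then $I + J$ has $(P)$.

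To do this, I would use the short exact sequence
\[
0 \longrightarrow I
\longrightarrow I + J
\longrightarrow J / (I \cap J)
\longrightarrow 0,
\]
just as in the proof of Corollary~\ref{C_7809_ExtCtForMaxx}. The point requiring a tiny bit of extra care in the separable setting is that one must know the algebras to which hypotheses~(\ref{C_7826_Sep_Quot}) and~(\ref{C_7826_Sep_Ext}) are applied are themselves separable. Since $A$ is separable, so are the ideals $I$ and $J$ (ideals of separable \ca{s} are separable), hence so is the quotient $J / (I \cap J)$; thus by~(\ref{C_7826_Sep_Quot}) the algebra $J / (I \cap J)$ has $(P)$. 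Then $I + J$ is a separable \ca{} which is an extension of $J / (I \cap J)$ by $I$, both of which have $(P)$, so by~(\ref{C_7826_Sep_Ext}) the algebra $I + J$ has $(P)$. This verifies condition~(\ref{L_7813_ExtCtForDSepMaxx_Sum}), and Lemma~\ref{L_7813_ExtCtForDSepMaxx} then gives the conclusion.

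There is no real obstacle here; the proof is a routine separable analogue of the earlier argument. The only thing worth flagging is the need to check separability of $I$, $J$, $I \cap J$, and $J/(I \cap J)$ so that the separable-algebra hypotheses apply, but these are all immediate from separability of~$A$ together with the facts that subalgebras and quotients of separable \ca{s} are separable.
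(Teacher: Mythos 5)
Your proof is correct and matches the paper's approach: the paper simply states that the proof is the same as that of Corollary~\ref{C_7809_ExtCtForMaxx}, i.e., one verifies the sum condition of Lemma~\ref{L_7813_ExtCtForDSepMaxx} via the extension $0 \to I \to I + J \to J/(I \cap J) \to 0$, exactly as you do. Your extra remarks on separability of the ideals and quotients are the (routine) details the paper leaves implicit.
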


\begin{proof}
The proof is the same as that of
Corollary~\ref{C_7809_ExtCtForMaxx}.
\end{proof}

The corollary to the following proposition
is the expected behavior of a
property which admits largest ideals.
However,
we don't know how to prove it without the additional hypotheses
in the statement,
and we suppose that in general it fails.

\begin{prp}\label{P_7809_Quot}
Let $(P)$ be a property of \ca{s}
which satisfies the conditions of Corollary~\ref{C_7809_ExtCtForMaxx}.
Let $A$ be a \ca{}
and let $I \subseteq A$ be an ideal.
Then $\LI{P}{A / \LI{P}{I}} = \LI{P}{A} / \LI{P}{I}$.
If $(P)$ instead
satisfies the conditions of Corollary~\ref{C_7826_Sep},
the same conclusion holds provided that $A$ is separable.
\end{prp}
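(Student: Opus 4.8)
The plan is to write $K = \LI{P}{I}$, first verify that $K$ is actually an ideal of~$A$ (so that $A / K$ and $\LI{P}{A} / K$ make sense), and then prove the two inclusions separately. First I would note that $K = \LI{P}{I}$, a priori only an ideal of~$I$, is in fact an ideal of~$A$: this is the standard fact that a closed two sided ideal of a closed two sided ideal of a \ca{} is an ideal of the ambient algebra. Concretely, for $a \in A$ and $x \in K$, write $x = \lim_{\mu} x u_{\mu}$ with $(u_{\mu})_{\mu}$ an approximate identity for~$K$; then $a x \in I$ because $I \subseteq A$ is an ideal, hence $(a x) u_{\mu} \in K$ because $K \subseteq I$ is an ideal, so $a x = \lim_{\mu} (a x) u_{\mu} \in K$, and $K A \subseteq K$ follows by taking adjoints. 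Since $K$ has~$(P)$ and is an ideal of~$A$, maximality of $\LI{P}{A}$ gives $K \subseteq \LI{P}{A}$.

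Next I would prove $\LI{P}{A} / K \subseteq \LI{P}{A / K}$: the algebra $\LI{P}{A}$ has~$(P)$ and contains~$K$, so $\LI{P}{A} / K$ is a quotient of an algebra with~$(P)$ and hence has~$(P)$ by condition~(\ref{C_7809_ExtStCtForMaxx_Quot}) of Corollary~\ref{C_7809_ExtCtForMaxx}; being an ideal of $A / K$, it lies inside $\LI{P}{A / K}$. For the reverse inclusion, let $q \colon A \to A / K$ be the quotient map and set $L = q^{-1} \bigl( \LI{P}{A / K} \bigr)$, an ideal of~$A$ containing~$K$ with $L / K \cong \LI{P}{A / K}$. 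The extension
\[
0 \longrightarrow K \longrightarrow L \longrightarrow L / K \longrightarrow 0
\]
has both $K$ and $L / K$ with~$(P)$, so $L$ has~$(P)$ by condition~(\ref{C_7809_ExtStCtForMaxx_Ext}); hence $L \subseteq \LI{P}{A}$ and $\LI{P}{A / K} = L / K \subseteq \LI{P}{A} / K$. Combining the two inclusions yields the claimed equality.

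For the separable statement, an ideal of a separable \ca{} and a quotient of a separable \ca{} are again separable, so $I$ and $A / K$ are separable; thus $\LI{P}{I}$, $\LI{P}{A}$, and $\LI{P}{A / K}$ all exist by Corollary~\ref{C_7826_Sep}, and the argument above applies verbatim, with conditions~(\ref{C_7826_Sep_Quot}) and~(\ref{C_7826_Sep_Ext}) replacing~(\ref{C_7809_ExtStCtForMaxx_Quot}) and~(\ref{C_7809_ExtStCtForMaxx_Ext}). I expect the only genuinely non formal point to be the observation that $K$ is an ideal of~$A$ (hence contained in $\LI{P}{A}$); after that, the two inclusions follow immediately from quotient closure and extension closure, and the directed union hypothesis enters only indirectly, through the corollaries that guarantee the existence of the largest ideals.
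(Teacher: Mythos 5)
Your proof is correct and follows essentially the same route as the paper: the forward inclusion via closure under quotients, and the reverse inclusion by pulling back the largest ideal of $A / \LI{P}{I}$ and applying closure under extensions to $0 \to \LI{P}{I} \to L \to L/\LI{P}{I} \to 0$. The only difference is that you spell out the (correct, standard) verification that $\LI{P}{I}$ is an ideal of $A$, a point the paper leaves implicit.
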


\begin{proof}
We prove only the first part.
The proof of the last statement is the same,
except that we require separability throughout.

The algebra $\LI{P}{A} / \LI{P}{I}$ is an ideal in $A / \LI{P}{I}$.
It has~$(P)$ since $\LI{P}{A}$ does and $(P)$ passes to quotients.

Now let $J \subseteq A / \LI{P}{I}$ be an ideal with~$(P)$.
We need to show that $J \subseteq \LI{P}{A} / \LI{P}{I}$.
Let $L \subseteq A$ be the inverse image of $J$ under the quotient map.
In the short exact sequence
\[
0 \longrightarrow \LI{P}{I}
\longrightarrow L
\longrightarrow J
\longrightarrow 0,
\]
$\LI{P}{I}$ and $J$ have~$(P)$,
so $L$ has~$(P)$ by hypothesis.
Therefore $L \subseteq \LI{P}{A}$,
whence $J \subseteq \LI{P}{A} / \LI{P}{I}$.
\end{proof}

\begin{cor}\label{C_7810_LILI}
Let $(P)$ be a property of \ca{s}
which satisfies the conditions of Corollary~\ref{C_7809_ExtCtForMaxx}.
Let $A$ be a \ca.
Then $A / \LI{P}{A}$ has no nonzero ideals with~$(P)$.
If $(P)$ instead
satisfies the conditions of Corollary~\ref{C_7826_Sep},
the same conclusion holds provided that $A$ is separable.
\end{cor}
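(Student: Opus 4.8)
The plan is to deduce the statement immediately from Proposition~\ref{P_7809_Quot}, applied with the ideal $I$ taken to be $A$ itself. First I would note that under the hypotheses $(P)$ admits largest ideals: this is Corollary~\ref{C_7809_ExtCtForMaxx} in the first case and Corollary~\ref{C_7826_Sep} in the separable case. Hence the notation $\LI{P}{\cdot}$ is legitimate for all the algebras involved, and in particular $\LI{P}{A}$ and $\LI{P}{A / \LI{P}{A}}$ make sense (in the separable case, $A$ separable forces both $A$, viewed as an ideal in itself, and $A / \LI{P}{A}$ to be separable).

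Taking $I = A$ in Proposition~\ref{P_7809_Quot}, we obtain
\[
\LI{P}{A / \LI{P}{A}} = \LI{P}{A} / \LI{P}{A} = \{ 0 \} .
\]
By the defining property of the largest ideal with~$(P)$, every ideal of $A / \LI{P}{A}$ with~$(P)$ is contained in $\LI{P}{A / \LI{P}{A}} = \{ 0 \}$, so $A / \LI{P}{A}$ has no nonzero ideal with~$(P)$. The separable version follows the same way, using the separable form of Proposition~\ref{P_7809_Quot}.

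There is essentially no obstacle here; the only thing to check is the applicability of the cited corollaries so that $\LI{P}{\cdot}$ is defined, and then the argument is a one-line specialization of Proposition~\ref{P_7809_Quot}. Alternatively, one could argue directly without invoking that proposition: if $\bar{J} \subseteq A / \LI{P}{A}$ were a nonzero ideal with~$(P)$, then its inverse image $L \subseteq A$ under the quotient map would sit in a short exact sequence $0 \to \LI{P}{A} \to L \to \bar{J} \to 0$ whose two outer terms have~$(P)$, so $L$ would have~$(P)$ by the extension hypothesis of Corollary~\ref{C_7809_ExtCtForMaxx} (respectively Corollary~\ref{C_7826_Sep}), forcing $L \subseteq \LI{P}{A}$ and hence $\bar{J} = \{ 0 \}$, a contradiction. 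This is just unwinding the proof of Proposition~\ref{P_7809_Quot}, so invoking that proposition is cleaner.
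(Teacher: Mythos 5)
Your proof is correct and is exactly the paper's argument: the paper's entire proof is ``In Proposition~\ref{P_7809_Quot}, take $I = A$.'' Your additional remarks on why $\LI{P}{\cdot}$ is defined and the unwound direct argument are fine but not needed.
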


\begin{proof}
In Proposition~\ref{P_7809_Quot}, take $I = A$.
\end{proof}

\section{Properties admitting largest ideals}\label{Sec_Max}

Our main result on crossed products
(Theorem~\ref{T_7809_FixedToCP})
requires
a property~$(P)$ that separably passes to ideals,
is separably stable, and separably admits largest ideals.
In this section,
we prove that various interesting properties
satisfy these conditions.
In Proposition~\ref{P_7809_IsMaxx},
we give a list for which separability is not needed,
but we postpone the proof until after some lemmas.
The properties for which we need separability
are in Lemma~\ref{L_7812_UCT}.

It is tempting to include
``being an AI~algebra with real rank zero''
as one of the properties,
by analogy with Lemma \ref{L_7812_UCT}(\ref{L_7812_UCT_ATZ}),
but this property is equivalent to being AF
by Corollary 3.2.17(i) of~\cite{Rrd}.

For Proposition \ref{P_7809_IsMaxx}(\ref{P_7809_IsMaxx_PI}),
recall that pure infiniteness for nonsimple \ca{s}
is defined in Definition 4.1 of~\cite{KR},
and for
Proposition \ref{P_7809_IsMaxx}(\ref{P_7809_IsMaxx_StPI}),
recall that strong pure infiniteness
is defined in Definition 5.1 of~\cite{KrRd2}.
In connection with
Proposition \ref{P_7809_IsMaxx}(\ref{P_7809_IsMaxx_ResidHer}),
we recall from Definition 5.1 of~\cite{PsnPh2}
that a class $\cC$ of \ca{s} is
{\emph{upwards directed}} if
whenever $A$ is a C*-algebra which contains a subalgebra isomorphic to
an algebra in~$\cC$,
then $A \in \cC$,
and from Definition 5.2 of~\cite{PsnPh2} that a \ca~$A$
is {\emph{residually hereditarily in}} such a class~$\cC$
if for every ideal $I \subseteq A$
and every nonzero hereditary subalgebra $B \subseteq A / I$,
we have $B \in \cC$.
See Remark~\ref{R_7809_ResidHer}
for some properties which have this form.

Nuclearity and type~I,
and to a lesser extent AF,
are included partly to relate our methods
to known results.
In particular,
we prove nothing new about nuclear or type~I \ca{s}.
Theorem~3.2 of~\cite{GtLz3} gives a much stronger statement
than our Theorem~\ref{T_7809_FixedToCP}
for type~I \ca{s}:
$G$ is only required to be compact
(not necessarily abelian or second countable),
and $A$ need not be separable.

\begin{prp}\label{P_7809_IsMaxx}
Each of the following properties
is stable, passes to ideals, and admits largest ideals:
\begin{enumerate}
\item\label{P_7809_IsMaxx_T1}
Type~I.
\item\label{P_7809_IsMaxx_Nuc}
Nuclearity.
\item\label{P_7809_IsMaxx_tsr1}
Stable rank one.
\item\label{P_7809_IsMaxx_RRZ}
Real rank zero.
\item\label{P_7809_IsMaxx_PI}
Pure infiniteness for nonsimple \ca{s}.
\item\label{P_7809_IsMaxx_StPI}
Strong pure infiniteness for nonsimple \ca{s},
provided that the zero \ca{} is included.
\item\label{P_7809_IsMaxx_Tor}
All ideals have torsion K-theory.
\item\label{P_7809_IsMaxx_Van}
All ideals have zero K-theory.
\item\label{P_7809_IsMaxx_RRZTor}
Real rank zero and all ideals have torsion $K_0$-groups.
\item\label{P_7809_IsMaxx_RRZTriv}
Real rank zero and all ideals have trivial $K_0$-groups.
\item\label{P_7809_IsMaxx_Sr1Tor}
Stable rank one and all ideals have torsion $K_1$-groups.
\item\label{P_7809_IsMaxx_Sr1Triv}
Stable rank one and all ideals have trivial $K_1$-groups.
\item\label{P_7809_IsMaxx_TDZ}
Topological dimension zero.
\item\label{P_7809_IsMaxx_ResidHer}
Being residually hereditarily in
a fixed upwards directed class of \ca{s}.
\end{enumerate}
\end{prp}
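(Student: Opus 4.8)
The plan is to verify, property by property, the three conditions: stability (under tensoring with $K$), passing to ideals, and admitting largest ideals. For several of these (type~I, nuclearity, stable rank one, real rank zero, topological dimension zero) the fact that the property passes to ideals and is stable is classical and can be cited; the remaining work is to show that the property admits largest ideals, and for this I would appeal to one of Corollary~\ref{C_7809_StCtForMaxx} or Corollary~\ref{C_7809_ExtCtForMaxx}. The extension-based criterion of Corollary~\ref{C_7809_ExtCtForMaxx} is the natural tool for properties like real rank zero and stable rank one, which are known to be closed under extensions (by results of Brown--Pedersen and Rieffel, respectively) and under inductive limits with injective connecting maps; topological dimension zero is similar. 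For type~I and nuclearity, closure under quotients, extensions, and increasing unions of ideals is standard, so Corollary~\ref{C_7809_ExtCtForMaxx} applies directly.

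For the K-theoretic properties~(\ref{P_7809_IsMaxx_Tor})--(\ref{P_7809_IsMaxx_Van}) and the combined properties~(\ref{P_7809_IsMaxx_RRZTor})--(\ref{P_7809_IsMaxx_Sr1Triv}), I would use the six-term exact sequence in $K$-theory: if $0 \to I \to A \to A/I \to 0$ is exact and $I$, $A/I$ both have all ideals with torsion (resp.\ trivial) $K$-groups, then the same holds for $A$, using that an ideal of $A$ meets $I$ in an ideal and has quotient an ideal of $A/I$, plus the fact that torsion groups (resp.\ the zero group) are closed under extensions. Closure under increasing unions of ideals follows from continuity of $K$-theory. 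Combining this with the already-established extension-closure of real rank zero and stable rank one handles the combined properties via Corollary~\ref{C_7809_ExtCtForMaxx}. For pure infiniteness~(\ref{P_7809_IsMaxx_PI}) and strong pure infiniteness~(\ref{P_7809_IsMaxx_StPI}) I would invoke the permanence results of Kirchberg--R{\o}rdam: pure infiniteness passes to ideals and quotients and is closed under extensions and increasing unions, by~\cite{KR}; strong pure infiniteness similarly by~\cite{KrRd2}, where one must explicitly include the zero algebra since it is not purely infinite under the usual convention. Stability of all these properties under $\otimes K$ is part of the same literature.

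For property~(\ref{P_7809_IsMaxx_ResidHer}), being residually hereditarily in an upwards directed class~$\cC$, the key observations are: a hereditary subalgebra of an ideal of $A$ is a hereditary subalgebra of $A$, and a quotient of an ideal $I/(I\cap J)$ embeds as an ideal (hence contains hereditary subalgebras that are hereditary in) a quotient of $A$; this should give closure under passing to ideals. For extensions, if $0 \to I \to A \to A/I \to 0$ with $I$ and $A/I$ both residually hereditarily in~$\cC$, and $B \subseteq A/L$ is a nonzero hereditary subalgebra for an ideal $L \subseteq A$, one looks at $B \cap \big((I+L)/L\big)$: if it is nonzero it is a nonzero hereditary subalgebra of the image of $I$, which is a quotient of $I$, so it lies in~$\cC$ and hence $B \in \cC$ by upwards directedness; if it is zero then $B$ embeds in a hereditary subalgebra of $(A/L)\big/\big((I+L)/L\big) \cong A/(I+L)$, a quotient of $A/I$, again giving $B \in \cC$. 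Closure under increasing unions is a direct limit argument. Then Corollary~\ref{C_7809_ExtCtForMaxx} applies once more. The main obstacle I anticipate is bookkeeping the ideal/hereditary-subalgebra correspondences precisely enough in this last case, and making sure in each case that the relevant permanence statement is available in exactly the nonsimple generality we need—particularly for strong pure infiniteness, where the inclusion of the zero algebra is the delicate point.
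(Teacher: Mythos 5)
Your overall strategy (cite the known stability and ideal-permanence results, then verify one of the Zorn-type criteria to obtain largest ideals) is the paper's strategy, but there is a genuine error at the centre of your treatment of stable rank one and real rank zero: these properties are \emph{not} closed under extensions, so Corollary~\ref{C_7809_ExtCtForMaxx} is not available for them. The Toeplitz algebra sits in an extension $0 \to K \to \cT \to C(\mathbb{T}) \to 0$ with both ends of stable rank one, yet $\tsr(\cT) = 2$; for real rank zero, Theorem~3.14 of~\cite{BnPd0} shows that an extension of real rank zero algebras has real rank zero only when projections lift from the quotient, which fails in general. The Brown--Pedersen and Rieffel results you allude to give passage to ideals and quotients and inequalities for extensions, not closure. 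This is precisely why the paper proves the weaker sum condition of Corollary~\ref{C_7809_StCtForMaxx} instead: for ideals $I, J$ of the same algebra, $I + J$ \emph{does} inherit stable rank one and real rank zero, by Proposition~2.14 of~\cite{BP2} --- a special feature of such ``internal'' extensions. The same repair is needed for the combined properties (\ref{P_7809_IsMaxx_RRZTor})--(\ref{P_7809_IsMaxx_Sr1Triv}); there the paper additionally replaces your six-term-sequence argument by the surjectivity of $K_0 (I \cap L) \oplus K_0 (J \cap L) \to K_0 (L)$ (Lemma~2.4 of~\cite{BP2}), respectively its $K_1$ analogue (Lemma~5.2 of~\cite{BnPd1}), which is where the real rank zero, respectively stable rank one, hypothesis on the pieces enters. (Your six-term argument is fine for (\ref{P_7809_IsMaxx_Tor}) and~(\ref{P_7809_IsMaxx_Van}) alone, since torsion and trivial groups are closed under group extensions.)

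The rest of the outline is sound and matches the paper: type~I, nuclearity, pure infiniteness, strong pure infiniteness, topological dimension zero, and residual hereditary membership in an upwards directed class are all handled via Corollary~\ref{C_7809_ExtCtForMaxx}, citing \cite{KR}, \cite{KrRd2} (with the extension step for strong pure infiniteness coming from~\cite{Kr3}, not~\cite{KrRd2}), \cite{BP09}, and~\cite{PsnPh2}. Your hands-on argument for extensions in the residually-hereditarily-in-$\cC$ case is essentially the proof of Proposition~5.8 of~\cite{PsnPh2}, which the paper simply cites; the case distinction on $B \cap \bigl( (I + L)/L \bigr)$ is the right idea, but you must still justify that a hereditary subalgebra meeting an ideal trivially maps isomorphically onto a hereditary subalgebra of the corresponding quotient.
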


We list some properties of the form in~(\ref{P_7809_IsMaxx_ResidHer}).
The proofs are in~\cite{PsnPh2}.
A convenient summary,
with explicit references,
is given at the end of Section~1 of~\cite{PsnPh3}.

\begin{rmk}\label{R_7809_ResidHer}
The following properties are covered
by Proposition \ref{P_7809_IsMaxx}(\ref{P_7809_IsMaxx_ResidHer}):
\begin{enumerate}
\item\label{R_7809_ResidHer_WIP}
The weak ideal property:
in the stabilization of the algebra,
every nonzero quotient of one ideal by another contains a nonzero
projection.
\item\label{R_7809_ResidHer_IPPI}
The combination of pure infiniteness for nonsimple \ca{s}
and the ideal property.
\item\label{R_7809_ResidHer_HerInf}
Residual hereditary infiniteness:
every nonzero hereditary subalgebra in every quotient
contains an infinite element
in the sense of Definition~3.2 of~\cite{KR}.
\item\label{R_7809_ResidHer_HerPropInf}
Residual hereditary proper infiniteness:
every nonzero hereditary subalgebra in every quotient
contains a properly infinite element
in the sense of Definition~3.2 of~\cite{KR}.
\item\label{R_7809_ResidHer_RSP}
Residual~(SP):
every nonzero hereditary subalgebra in every quotient
contains a nonzero \pj.
\end{enumerate}
\end{rmk}

\begin{exa}\label{E_7809_NotHered}
The ideal property admits largest ideals
(Proposition 6.2 of~\cite{CP})
but is
not separably stable
(see Example~2.8 of~\cite{PsnPh1}).
\end{exa}

\begin{lem}\label{L_7810_SatCond}
Each of the following properties satisfies
the conditions of Corollary~\ref{C_7809_StCtForMaxx}:
\begin{enumerate}
\item\label{L_7810_SatCond_tsr1}
Stable rank one.
\item\label{L_7810_SatCond_RRZ}
Real rank zero.
\item\label{L_7810_SatCond_Tor}
All ideals have torsion K-theory.
\item\label{L_7810_SatCond_Van}
All ideals have zero K-theory.
\item\label{L_7810_SatCond_RRZTor}
Real rank zero and all ideals have torsion $K_0$-groups.
\item\label{L_7810_SatCond_RRZTriv}
Real rank zero and all ideals have trivial $K_0$-groups.
\item\label{L_7810_SatCond_Sr1Tor}
Stable rank one and all ideals have torsion $K_1$-groups.
\item\label{L_7810_SatCond_Sr1Triv}
Stable rank one and all ideals have trivial $K_1$-groups.
\end{enumerate}
\end{lem}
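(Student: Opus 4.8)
The plan is to verify the three conditions of Corollary~\ref{C_7809_StCtForMaxx} for each property on the list, handling them in a uniform way as much as possible. Condition~(\ref{C_7809_StCtForMaxx_Z}), that the zero \ca{} has the property, is immediate in every case: the zero algebra has stable rank one and real rank zero (vacuously), and it has no nonzero ideals, so any condition quantified over its ideals holds trivially. Condition~(\ref{C_7809_StCtForMaxx_Dir}), closure under increasing unions of ideals, is a known permanence property: stable rank one passes to inductive limits (and an increasing union of ideals, with closure, is such a limit), and likewise for real rank zero; for the ``all ideals have torsion/zero $K$-theory'' and ``all ideals have torsion/trivial $K_i$'' variants, one uses that an ideal $J$ of $\ov{\bigcup_\ld I_\ld}$ satisfies $J = \ov{\bigcup_\ld (J \cap I_\ld)}$, each $J \cap I_\ld$ is an ideal of $I_\ld$ hence has the relevant $K$-theory condition, and $K_*$ commutes with inductive limits, so the colimit group is again torsion (resp.\ zero), being a direct limit of such groups.

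The heart of the matter is condition~(\ref{C_7809_StCtForMaxx_Sum}): if $I, J \subseteq A$ are ideals with the property, then $I + J$ has the property. Here I would use the short exact sequence
\[
0 \longrightarrow I \longrightarrow I + J \longrightarrow J / (I \cap J) \longrightarrow 0 .
\]
Now $J / (I \cap J)$ is a quotient of $J$, and $I \cap J$ is an ideal of $J$, so for the $K$-theory conditions one first notes that these conditions pass to both ideals and quotients (an ideal of an ideal of $A$ is an ideal of $A$; a quotient $J/(I \cap J)$ has its ideals of the form $L/(I \cap J)$ for ideals $L$ of $A$ lying between $I \cap J$ and $J$, and the six-term exact sequence in $K$-theory applied to $0 \to I \cap J \to L \to L/(I\cap J) \to 0$ shows $K_*(L/(I \cap J))$ is torsion, resp.\ zero, when the outer terms are). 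Then the six-term sequence for the displayed extension forces the $K$-theory condition on $I + J$. For real rank zero, I would invoke the fact (Brown--Pedersen) that real rank zero passes to ideals and quotients, and that an extension of real-rank-zero algebras by a real-rank-zero algebra has real rank zero provided projections lift — but since $I$ is an ideal with real rank zero, projections in the quotient $(I+J)/I \cong J/(I\cap J)$ do lift to $I+J$ (this is the standard $K_0$-surjectivity/lifting argument for RR$0$ extensions), so $I + J$ has real rank zero. The same template, with stable rank one in place of real rank zero, handles (\ref{L_7810_SatCond_tsr1}): stable rank one passes to ideals and quotients, and is closed under extensions when the relevant index map vanishes, which holds here because the quotient $J/(I\cap J)$ has stable rank one and $I$ is an ideal. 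The combined properties (\ref{L_7810_SatCond_RRZTor})--(\ref{L_7810_SatCond_Sr1Triv}) follow by conjoining the RR$0$ (resp.\ sr$1$) argument with the corresponding $K$-theory argument.

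The main obstacle I anticipate is the extension step for real rank zero and stable rank one: these are genuinely not closed under arbitrary extensions, so one must carefully exploit that $I$ is an \emph{ideal} of $I+J$ with the property, not merely a subalgebra, in order to get the needed lifting of projections (for RR$0$) or the vanishing of the exponential/index map (for sr$1$). The cleanest route is probably to cite the known characterizations — e.g.\ that an extension $0 \to I \to E \to E/I \to 0$ with $I$ and $E/I$ of real rank zero has real rank zero iff projections in $E/I$ lift, together with the fact that this lifting is automatic here — rather than reproving them. Everything else reduces to standard permanence properties of real rank zero, stable rank one, and $K$-theory under ideals, quotients, and inductive limits, which I would simply quote.
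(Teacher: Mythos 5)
Your overall architecture matches the paper's: the zero algebra trivially has each property; closure under increasing unions of ideals follows from permanence under direct limits together with the fact that ideals in direct limit algebras are direct limits of ideals and that K-theory commutes with direct limits; and the real work is condition~(\ref{C_7809_StCtForMaxx_Sum}), attacked via the extension $0 \to I \to I+J \to J/(I\cap J) \to 0$. For the properties ``all ideals have torsion (resp.\ zero) K-theory,'' where the full $K_*$ is controlled, your two applications of the six-term sequence are essentially what the paper does. However, there are two gaps.

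First, your justification of the sum step for real rank zero and stable rank one by themselves is not right as stated. You claim projections in $(I+J)/I \cong J/(I\cap J)$ lift ``since $I$ is an ideal with real rank zero''; if that were a valid reason, real rank zero would be closed under all extensions with real rank zero ideal and quotient, which it is not. The lifting that is actually available here comes from the \emph{other} ideal: the quotient map $I+J \to J/(I\cap J)$ restricts to a surjection from $J$, and projections lift from quotients of real rank zero algebras. For stable rank one the ``vanishing index map'' heuristic is even less formal. The paper avoids all of this by citing Proposition 2.14(ii) and (iii) of~\cite{BP2}, which assert directly that a sum of two ideals with stable rank one (resp.\ real rank zero) again has stable rank one (resp.\ real rank zero); that is a genuine theorem about sums of ideals, not a formal consequence of extension criteria.

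Second, and more seriously, your treatment of the four combined properties (real rank zero with torsion or trivial $K_0$, stable rank one with torsion or trivial $K_1$) does not go through. There only one of the two K-groups is hypothesized to be torsion or trivial, and the six-term sequence inextricably mixes $K_0$ and $K_1$: for an ideal $L \subseteq I+J$, exactness of $K_0(I\cap L) \to K_0(L) \to K_0\bigl(L/(I\cap L)\bigr)$ only helps if you already know $K_0\bigl(L/(I\cap L)\bigr)$ is torsion, and the sequence computing that group from $J\cap L$ and $I\cap J\cap L$ passes through $K_1(I\cap J\cap L)$, about which the hypothesis says nothing. The paper's fix is to replace the six-term sequence by a Mayer--Vietoris-type surjectivity statement: writing $L = (I\cap L) + (J\cap L)$ via an approximate identity, Lemma~2.4 of~\cite{BP2} (using real rank zero of $I\cap L$) gives surjectivity of $K_0(I\cap L)\oplus K_0(J\cap L) \to K_0(L)$, and Lemma~5.2 of~\cite{BnPd1} (using stable rank one) gives the analogous surjectivity in $K_1$. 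These lemmas, which see only the relevant K-group, are the essential ingredient missing from your argument.
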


\begin{proof}
The zero algebra certainly has all of these properties.

We claim that all of the properties are preserved by
arbitrary direct limits.
This is well known
(and easy to prove)
for stable rank one and real rank zero.
For properties (\ref{L_7810_SatCond_Tor}),
(\ref{L_7810_SatCond_Van}),
(\ref{L_7810_SatCond_RRZTor}),
(\ref{L_7810_SatCond_RRZTriv}),
(\ref{L_7810_SatCond_Sr1Tor}),
and~(\ref{L_7810_SatCond_Sr1Triv}),
we use the following facts:
ideals in direct limit algebras are direct limits of ideals
in the algebras in the system;
K-theory commutes with direct limits;
and direct limits of torsion groups or trivial groups
are torsion groups or trivial groups.

It remains to prove that
if $A$ is a \ca{} and $I, J \subseteq A$ are ideals with
one of these properties,
then $I + J$ has the same property.
For stable rank one this is
Proposition 2.14(ii) of~\cite{BP2},
and for real rank zero it is
Proposition 2.14(iii) of~\cite{BP2}.

For the other properties, consider the
short exact sequence
\[
0 \longrightarrow I
\longrightarrow I + J
\longrightarrow J / (I \cap J)
\longrightarrow 0.
\]
For (\ref{L_7810_SatCond_RRZTor}),
(\ref{L_7810_SatCond_RRZTriv}),
(\ref{L_7810_SatCond_Sr1Tor}),
and~(\ref{L_7810_SatCond_Sr1Triv}),
we have to consider an arbitrary ideal $L \subseteq I + J$.
One uses an approximate identity for~$L$ to check that
$L = I \cap L + J \cap L$.
The ideals $I \cap L$ and $J \cap L$
have real rank zero or stable rank one as appropriate,
because these properties pass to ideals,
and they have torsion or trivial $K_0$ or $K_1$~groups
as appropriate by hypothesis.
In the real rank zero cases,
we now appeal to Lemma~2.4 of~\cite{BP2}.
Since $I \cap L$ has real rank zero,
it says that
$K_0 (I \cap L) \oplus K_0 (J \cap L) \to K_0 (L)$
is surjective.
So if $K_0 (I \cap L)$ and $K_0 (J \cap L)$ are both torsion,
or both trivial,
so is $K_0 (L)$.
In the stable rank one cases,
we appeal instead to Lemma~5.2 of~\cite{BnPd1},
which similarly gives surjectivity of
$K_1 (I \cap L) \oplus K_1 (J \cap L) \to K_1 (L)$,
and argue in the same way.

Parts (\ref{L_7810_SatCond_Tor}) and~(\ref{L_7810_SatCond_Van})
are similar to the argument of the previous paragraph,
but simpler.
Let $L \subseteq I + J$
be an ideal,
and observe as above that $L = (I \cap L) + (J \cap L)$.
The six term exact sequence in K-theory for the extension
\[
0 \longrightarrow I \cap J \cap L
\longrightarrow J \cap L
\longrightarrow (J \cap L) / (I \cap J \cap L)
\longrightarrow 0
\]
shows that $K_* \big( (J \cap L) / (I \cap J \cap L) \big)$ is torsion
(assuming~(\ref{L_7810_SatCond_Tor}))
or is trivial
(assuming~(\ref{L_7810_SatCond_Van})).
Then the six term exact sequence in K-theory for the extension
\[
0 \longrightarrow I \cap L
\longrightarrow (I \cap L) + (J \cap L)
\longrightarrow (J \cap L) / (I \cap J \cap L)
\longrightarrow 0
\]
shows that $K_* \big( (I \cap L) + (J \cap L) \big)$ is torsion
(assuming~(\ref{L_7810_SatCond_Tor}))
or is trivial
(assuming~(\ref{L_7810_SatCond_Van})).
\end{proof}

\begin{lem}\label{L_7810_StSatCond}
Each of the following properties satisfies
the conditions of Corollary~\ref{C_7809_ExtCtForMaxx}:
\begin{enumerate}
\item\label{L_7810_SatCond_Nuc}
Nuclearity.
\item\label{L_7810_StSatCond_PI}
Pure infiniteness for nonsimple \ca{s}.
\item\label{L_7810_StSatCond_StPI}
Strong pure infiniteness for nonsimple \ca{s},
provided that the zero \ca{} is included.
\item\label{L_7810_StSatCond_TDZ}
Topological dimension zero.
\item\label{L_7810_StSatCond_ResidHer}
Being residually hereditarily in
an upwards directed class of \ca{s}.
\end{enumerate}
\end{lem}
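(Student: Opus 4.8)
The plan is to verify, for each of the five properties, the four hypotheses of Corollary~\ref{C_7809_ExtCtForMaxx}: the zero algebra has the property; quotients of algebras with the property have the property; extensions of algebras with the property have the property; and increasing directed unions of ideals with the property have the property. The zero-algebra condition is trivial in every case (it is explicitly built into the statement for strong pure infiniteness, and is standard for the others). So the real content is the three permanence properties, and I would simply cite the literature for each.

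For nuclearity, all three permanence properties are classical: nuclearity passes to quotients and ideals, is closed under extensions, and is closed under inductive limits (see, e.g., the standard references on nuclear \ca{s}). For pure infiniteness of nonsimple \ca{s} and for strong pure infiniteness, I would invoke Kirchberg--Rørdam: quotients and ideals of (strongly) purely infinite \ca{s} are again such, extensions of (strongly) purely infinite \ca{s} by (strongly) purely infinite \ca{s} are such, and inductive limits of (strongly) purely infinite \ca{s} are such. (The relevant statements are in~\cite{KR} and~\cite{KrRd2}; pure infiniteness is treated in~\cite{KR} and strong pure infiniteness in~\cite{KrRd2}.) For topological dimension zero, I would cite the corresponding permanence results (quotients, ideals, extensions, and inductive limits all preserve topological dimension zero). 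Finally, for being residually hereditarily in an upwards directed class~$\cC$, I would appeal to~\cite{PsnPh2}, where exactly these permanence properties are established; a convenient summary with references is in Section~1 of~\cite{PsnPh3}.

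The one point requiring a small argument rather than a bare citation is the directed-union condition~(\ref{C_7809_ExtStCtForMaxx_Dir}) of Corollary~\ref{C_7809_ExtCtForMaxx} in the ``residually hereditarily in~$\cC$'' case, since the standard statements in~\cite{PsnPh2} may be phrased in terms of sequential inductive limits or general inductive limits rather than an increasing directed family of ideals inside a fixed algebra. Here one observes that if $(I_\ld)_{\ld \in \Ld}$ is an increasing directed family of ideals in $A$, then $\overline{\bigcup_{\ld} I_\ld}$ is the inductive limit of the system $(I_\ld)_{\ld \in \Ld}$ with the inclusion maps, and one uses the fact that for any ideal $L$ of $\overline{\bigcup_\ld I_\ld}$ and any nonzero hereditary subalgebra $B$ of the quotient, both $L$ and $B$ are ``captured'' at a finite stage in an appropriate sense — more precisely, one reduces the residual-hereditary condition for the limit to the same condition for the $I_\ld$ by a direct-limit argument on ideals and hereditary subalgebras. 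I expect this bookkeeping to be the only mild obstacle; everything else is citation.
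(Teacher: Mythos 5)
Your proposal matches the paper's proof, which verifies the four conditions of Corollary~\ref{C_7809_ExtCtForMaxx} for each property purely by citation; in particular, for the residually-hereditarily-in-$\cC$ case the closure-of-an-increasing-union-of-ideals statement is already available in exactly the needed form (Proposition 5.9(2) of~\cite{PsnPh2}), so the extra direct-limit bookkeeping you anticipate is not needed. The only adjustment is a citation: closure of strong pure infiniteness under extensions is not in~\cite{KrRd2} but is Theorem~1.3 of Kirchberg's separate preprint~\cite{Kr3}.
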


For separable \ca{s},
topological dimension zero is known to be of the form
residually hereditarily in~$\cC$
for an upwards directed class $\cC$ of \ca{s}.
See Theorem 2.10 of~\cite{PsnPh3}.
This is probably true in general,
but we don't need to know this to prove that
it satisfies
the conditions of Corollary~\ref{C_7809_ExtCtForMaxx}.

\begin{cor}\label{C_7828_GoodLg}
Let $(P)$ be any of the properties in Lemma~\ref{L_7810_StSatCond}.
Then every \ca~$A$ has a largest ideal $\LI{P}{A}$ with~$(P)$,
and $A / \LI{P}{A}$ has no nonzero ideals with~$(P)$.
\end{cor}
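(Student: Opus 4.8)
The plan is to deduce Corollary~\ref{C_7828_GoodLg} directly from two earlier results applied to the list in Lemma~\ref{L_7810_StSatCond}. By Lemma~\ref{L_7810_StSatCond}, each property $(P)$ in that list satisfies the hypotheses of Corollary~\ref{C_7809_ExtCtForMaxx}. First I would invoke Corollary~\ref{C_7809_ExtCtForMaxx} itself: it says precisely that a property satisfying those four conditions (the zero algebra has $(P)$, quotients and extensions preserve $(P)$, and increasing unions over directed families stay in $(P)$) admits largest ideals. Hence every \ca~$A$ has a largest ideal $\LI{P}{A}$ with~$(P)$, and the notation $\LI{P}{A}$ from Notation~\ref{N_7809_MaxI} is justified.

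For the second assertion, I would apply Corollary~\ref{C_7810_LILI}, whose hypothesis is again exactly that $(P)$ satisfies the conditions of Corollary~\ref{C_7809_ExtCtForMaxx}. That corollary states that $A / \LI{P}{A}$ has no nonzero ideals with~$(P)$, which is the remaining claim. So the entire content of Corollary~\ref{C_7828_GoodLg} is the conjunction of Corollary~\ref{C_7809_ExtCtForMaxx} and Corollary~\ref{C_7810_LILI}, specialized to properties from Lemma~\ref{L_7810_StSatCond}.

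There is essentially no obstacle here: the work has all been done in the preceding lemmas and corollaries, and this statement merely packages it for convenient reference in the crossed-product section. The only thing to be careful about is citing Lemma~\ref{L_7810_StSatCond} to license the use of Corollary~\ref{C_7809_ExtCtForMaxx}; everything else is a one-line appeal. I would write the proof as: ``By Lemma~\ref{L_7810_StSatCond}, $(P)$ satisfies the conditions of Corollary~\ref{C_7809_ExtCtForMaxx}. The first statement is now Corollary~\ref{C_7809_ExtCtForMaxx}, and the second is Corollary~\ref{C_7810_LILI}.'' That is the whole argument.
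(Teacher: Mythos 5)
Your proposal is correct and is essentially identical to the paper's own proof, which simply combines Lemma~\ref{L_7810_StSatCond}, Corollary~\ref{C_7809_ExtCtForMaxx}, and Corollary~\ref{C_7810_LILI}. Nothing further is needed.
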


\begin{proof}
Combine Lemma~\ref{L_7810_StSatCond},
Corollary~\ref{C_7809_ExtCtForMaxx},
and Corollary~\ref{C_7810_LILI}.
\end{proof}

\begin{proof}[Proof of Lemma~\ref{L_7810_StSatCond}]
All the conditions are well known for nuclearity,
so we only consider the other four properties.

The zero \ca{} is purely infinite by convention;
see the discussion after Definition 4.1 of~\cite{KR}.
It is included in~(\ref{L_7810_StSatCond_StPI}) by definition.
It is trivial that the zero \ca{}
has both the other two properties.

The class of purely infinite \ca{s}
is closed under quotients and extensions by Theorem 4.19 of~\cite{KR},
and closed under arbitrary direct limits
by Proposition 4.18 of~\cite{KR}.
The class of strongly purely infinite \ca{s}
is closed under quotients by Proposition 5.11(i) of~\cite{KrRd2},
under extensions by Theorem 1.3 of~\cite{Kr3},
and under arbitrary direct limits
by Proposition 5.11(iv) of~\cite{KrRd2}.
Moreover,
the validity of none of these results
is affected by adding the zero \ca{} to the class.

For the rest of~(\ref{L_7810_StSatCond_TDZ}),
use Proposition~2.6 of~\cite{BP09}
to see that topological dimension zero
passes to extensions and quotients,
and use Lemma~2.6 of~\cite{PsnPh3}
and Lemma~3.6 of~\cite{PsnPh1}
to see that it passes to closures of increasing unions of ideals.

For the rest of~(\ref{L_7810_StSatCond_ResidHer}),
use instead Proposition 5.8 of~\cite{PsnPh2}
for extensions and quotients,
and use Proposition 5.9(2) of~\cite{PsnPh2}
for closures of increasing unions of ideals.
\end{proof}

\begin{proof}[Proof of Proposition~\ref{P_7809_IsMaxx}]
Stability is well known for type~I and nuclearity.
For stable rank one, it is Theorem 3.6 of~\cite{Rf},
and for real rank zero it is Corollary 3.3
and Corollary 2.8 of~\cite{BnPd0}.
For pure infiniteness it is Theorem 4.23 of~\cite{KR},
and for strong pure infiniteness it is
Proposition 5.11(iii) of~\cite{KrRd2}.
For conditions (\ref{P_7809_IsMaxx_Tor}),
(\ref{P_7809_IsMaxx_Van}),
(\ref{P_7809_IsMaxx_RRZTor}),
(\ref{P_7809_IsMaxx_RRZTriv}),
(\ref{P_7809_IsMaxx_Sr1Tor}),
and~(\ref{P_7809_IsMaxx_Sr1Triv}),
use what has been already observed,
the fact that stable isomorphism of algebras
gives stable isomorphism of their ideals,
and fact that stable isomorphism preserves K-theory.
Stable isomorphism preserves topological dimension zero
because it preserves the primitive ideal space.
Stable isomorphism preserves the condition
residually hereditarily in~$\cC$
by Proposition 5.12(2) of~\cite{PsnPh2}.

Passage to ideals is also well known for type~I and nuclearity.
For stable rank one, it is Theorem 4.4 of~\cite{Rf},
and for real rank zero it is Corollary 2.8 of~\cite{BnPd0}.
Given this, it is built into the definition
for (\ref{P_7809_IsMaxx_Tor}),
(\ref{P_7809_IsMaxx_Van}),
(\ref{P_7809_IsMaxx_RRZTor}),
(\ref{P_7809_IsMaxx_RRZTriv}),
(\ref{P_7809_IsMaxx_Sr1Tor}),
and~(\ref{P_7809_IsMaxx_Sr1Triv}).
Passage to ideals preserves pure infiniteness
by Theorem 4.19 of~\cite{KR}
and strong pure infiniteness by Proposition 5.11(ii) of~\cite{KrRd2}.
It preserves topological dimension zero
by Proposition~2.6 of~\cite{BP09},
and preserves the condition
residually hereditarily in~$\cC$
by Proposition~5.8 of~\cite{PsnPh2}.

Finally,
we prove that the properties admit largest ideals.
For type~I this is well known.
For nuclearity, pure infiniteness,
strong pure infiniteness,
topological dimension zero,
and residually hereditarily in
an upwards directed class,
use Lemma~\ref{L_7810_StSatCond}
and Corollary~\ref{C_7809_ExtCtForMaxx}.
For all the other properties,
use Lemma~\ref{L_7810_SatCond}
and Corollary~\ref{C_7809_StCtForMaxx}.
\end{proof}

It was already known that stable rank one admits largest ideals,
by Proposition~4.2 of~\cite{Rrd0},
since this result implies that the ideal ${\mathcal{K}}$
described there is the largest ideal with stable rank one.
It was also known that
real rank zero
admits largest ideals, by Theorem~2.3 of~\cite{BP2},
or by the
remark at the end of the proof of Proposition~2.5 of~\cite{BP2}.
In both cases,
these results give explicit descriptions of the largest ideal.
However, the line of reasoning used here
is needed to deal with the conditions
involving real and stable rank with K-theory restrictions.

\begin{lem}\label{L_7826_DSt}
Each of the following properties satisfies
the conditions of Corollary~\ref{C_7826_Sep}:
\begin{enumerate}
\item\label{L_7826_DSt_UCT}
All ideals satisfy the Universal Coefficient Theorem.
\item\label{L_7826_DSt_DSt}
$D$-stability for a \ssa{} \ca~$D$.
\end{enumerate}
\end{lem}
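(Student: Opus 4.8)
The plan is to verify, for each of the two properties separately, the four hypotheses of Corollary~\ref{C_7826_Sep}: that the zero \ca{} has the property, that the property passes to quotients of separable \ca{s}, that it passes to extensions of separable \ca{s}, and that it is preserved under taking the closure of an increasing sequence of ideals. In both cases the clause about the zero \ca{} is immediate. Two elementary reductions will be used throughout. First, if $I \subseteq A$ is an ideal, then every ideal of $A/I$ has the form $L/I$ for an ideal $I \subseteq L \subseteq A$, and for any ideal $L \subseteq A$ one has the short exact sequence $0 \to L \cap I \to L \to L/(L \cap I) \to 0$ with $L/(L \cap I) \cong (L + I)/I$, an ideal of $A/I$. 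Second, if $A$ is the closure of an increasing sequence of ideals $I_0 \subseteq I_1 \subseteq \cdots \subseteq A$ and $J \subseteq A$ is an ideal, then $J = \overline{\bigcup_{n} (J \cap I_n)}$, so that $J$ is a countable inductive limit of the ideals $J \cap I_n$ of the~$I_n$.

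For property~(\ref{L_7826_DSt_UCT}) the key external facts are that the class of separable \ca{s} satisfying the UCT is closed under countable inductive limits and has the two-out-of-three property for short exact sequences: if two of the three terms of such a sequence (of separable \ca{s}) satisfy the UCT, then so does the third. Granting these, the three conditions follow. For quotients: if $A$ has the property and $I \subseteq L \subseteq A$ with $L$ an ideal of~$A$, then $I$ and $L$ satisfy the UCT, so $L/I$ does by two-out-of-three; hence every ideal of $A/I$ satisfies the UCT. For extensions of $0 \to I \to A \to B \to 0$ with $I$ and $B$ separable and having the property: given an ideal $L \subseteq A$, the ideal $L \cap I$ of~$I$ and the ideal $(L + I)/I \cong L/(L \cap I)$ of~$B$ both satisfy the UCT, so $L$ does by two-out-of-three. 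For the increasing union: by the second reduction, each ideal of $\overline{\bigcup_n I_n}$ is a countable inductive limit of ideals $J \cap I_n$ of the~$I_n$, each of which satisfies the UCT, hence so does the limit.

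For property~(\ref{L_7826_DSt_DSt}), $D$-stability for a fixed \ssa{} \ca~$D$, I would invoke the established permanence properties of $D$-absorption in the separable setting. Preservation under countable inductive limits is elementary (a $D$-structure on each $I_n$ passes to the limit), and it settles the increasing-union condition directly, since $\overline{\bigcup_n I_n}$ is an inductive limit of the $D$-stable \ca{s}~$I_n$. Passage to quotients of separable \ca{s} is known (Toms--Winter): a unital embedding of~$D$ into the central sequence algebra of~$A$ pushes forward to one into the central sequence algebra of a quotient. The remaining and subtlest condition, closure under extensions of separable \ca{s}, is also a theorem; it is proved using the characterization of $D$-stability via unital embeddings of~$D$ into (corrected) central sequence algebras together with the behaviour of those algebras with respect to ideals and quotients, and I would cite it rather than reprove it.

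Thus the only genuinely hard input is external to this lemma: the fact that $D$-stability passes to extensions of separable \ca{s}. Everything else reduces, via the two elementary observations above, to the closure properties of the UCT class and to the easy permanence properties of $D$-stability. Separability is used essentially in both cases — to make sense of the UCT and to run the central-sequence arguments for $D$-stability — which is why these two properties belong in the separable framework of Corollary~\ref{C_7826_Sep} rather than in Corollary~\ref{C_7809_ExtCtForMaxx}.
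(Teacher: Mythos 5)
Your proposal is correct and follows essentially the same route as the paper: the two‑out‑of‑three property and closure under countable inductive limits for the UCT, and the Toms--Winter permanence results (quotients, extensions, sequential direct limits) for $D$-stability, combined with the observation that ideals of quotients, extensions, and closed increasing unions decompose as you describe. The only detail the paper adds that you omit is that the Toms--Winter theorems carry a $K_1$-injectivity hypothesis on~$D$, which is automatic for strongly selfabsorbing algebras by a remark of Winter; it is worth citing this when invoking those results.
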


\begin{cor}\label{C_7828_SepGoodLg}
Let $(P)$ be any of the properties in Lemma~\ref{L_7826_DSt}.
Then every separable \ca~$A$ has a largest ideal $\LI{P}{A}$ with~$(P)$,
and $A / \LI{P}{A}$ has no nonzero ideals with~$(P)$.
\end{cor}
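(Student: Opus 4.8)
The plan is simply to chain together the machinery already in place, exactly as in the proof of Corollary~\ref{C_7828_GoodLg}. First I would invoke Lemma~\ref{L_7826_DSt} to see that each of the two properties~$(P)$ satisfies the hypotheses of Corollary~\ref{C_7826_Sep}. By Corollary~\ref{C_7826_Sep} this says that $(P)$ separably admits largest ideals, so every separable \ca~$A$ has a largest ideal $\LI{P}{A}$ with~$(P)$. Finally, since $(P)$ satisfies the conditions of Corollary~\ref{C_7826_Sep} and $A$ is separable, the separable version of Corollary~\ref{C_7810_LILI} applies and gives that $A / \LI{P}{A}$ has no nonzero ideals with~$(P)$. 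That is the whole argument.

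Since the substance really lies in Lemma~\ref{L_7826_DSt}, it is worth recording how one establishes it (even though it may be cited). One checks the four hypotheses of Corollary~\ref{C_7826_Sep} for each property. The zero \ca{} has both trivially: it has no nonzero ideals and has trivial K-theory, and $0 \otimes D = 0$. For the first property (all ideals satisfy the UCT), passage to quotients and to extensions both reduce, via the correspondence between the ideals of~$A$, the ideals of an ideal $I \S A$, and the ideals of $A / I$, to the two-out-of-three property of the UCT along a short exact sequence of the form $0 \to I \to J \to J / I \to 0$ (respectively $0 \to L \cap I \to L \to L / (L \cap I) \to 0$); and passage to the closure of an increasing union of ideals follows once one checks that an ideal $L$ of $\overline{\bigcup_n I_n}$ satisfies $L = \overline{\bigcup_n (L \cap I_n)}$ --- an approximate identity argument using that $a \in L$ and $x \in I_m$ force $a x \in L \cap I_m$ --- so that $L$ becomes a countable inductive limit of algebras satisfying the UCT. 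For the second property, the three nontrivial conditions are the known permanence results that $D$-stability for a \ssa~$D$ passes to quotients, to extensions, and to countable inductive limits of separable \ca{s}.

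There is essentially no obstacle in the corollary itself; it is pure bookkeeping on top of Lemma~\ref{L_7826_DSt} and Corollary~\ref{C_7810_LILI}. Inside Lemma~\ref{L_7826_DSt} the only step that is not imported is the density identity $L = \overline{\bigcup_n (L \cap I_n)}$, which is routine. The genuinely deep inputs are external and I would simply cite them: the two-out-of-three property of the UCT in short exact sequences, and the fact that $D$-stability is preserved under extensions of separable \ca{s} --- which, through the work of Toms and Winter, rests on every \ssa{} \ca{} being $K_1$-injective, hence on Winter's theorem that \ssa{} \ca{s} are $\mathcal{Z}$-stable. I expect no real difficulty beyond assembling these references.
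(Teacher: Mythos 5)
Your proposal is correct and follows exactly the paper's own route: the corollary is proved by combining Lemma~\ref{L_7826_DSt}, Corollary~\ref{C_7826_Sep}, and (the separable case of) Corollary~\ref{C_7810_LILI}, and your sketch of Lemma~\ref{L_7826_DSt} itself also matches the paper's argument (two-out-of-three for the UCT via the ideal correspondence, and the Toms--Winter permanence results for $D$-stability).
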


\begin{proof}
Combine Lemma~\ref{L_7826_DSt},
Corollary~\ref{C_7826_Sep},
and Corollary~\ref{C_7810_LILI}.
\end{proof}

\begin{proof}[Proof of Lemma~\ref{L_7826_DSt}]
We first consider the property
that all ideals satisfy the Universal Coefficient Theorem.
This is trivial for the zero \ca.
It is known that countable direct limits preserve
the Universal Coefficient Theorem,
and that ideals in direct limit algebras are direct limits of ideals
in the algebras in the system.
So condition~(\ref{C_7826_Sep_Dir})
in Corollary~\ref{C_7826_Sep} holds.

For the other two parts,
first recall that if we have a short exact sequence
of separable \ca{s}
in which two out of three of the algebras
satisfy the Universal Coefficient Theorem,
then so does the third.
Now let $A$ be a separable \ca{}
all of whose ideals satisfy the Universal Coefficient Theorem,
let $I \subseteq A$ be an ideal,
and let $L \subseteq A / I$ be an ideal.
Let $J$ be the inverse image of $L$ in~$A$.
Then $I$ and $J$ satisfy the Universal Coefficient Theorem,
so $L \cong J / I$ does too.
Next,
let $A$ be a separable \ca, let $I \subseteq A$ be an ideal,
suppose all ideals of $I$ and $A / I$
satisfy the Universal Coefficient Theorem,
and let $J \subseteq A$ be an ideal.
Let $L$ be the image of $J$ in~$A / I$.
Then $J \cap I$ is an ideal in~$I$
and $L$ is an ideal in $A / I$,
so both satisfy the Universal Coefficient Theorem.
Since $J / (J \cap I) \cong L$,
the ideal $J$ also satisfies the Universal Coefficient Theorem.

Now let $D$ be a \ssa{} \ca,
and consider $D$-stability.
The zero \ca{} is trivially $D$-stable.
The \ca~$D$ is automatically $K_1$-injective by Remark~3.3 of~\cite{Wn}.
We apply results of~\cite{TmWnt1};
note the blanket assumption on~$D$ at the
beginning of Section~3 of~\cite{TmWnt1},
which applies in both Section~3 and Section~4 there
and includes $K_1$-injectivity.
The class of separable $D$-absorbing \ca{s}
is closed under quotients by Corollary~3.3 of~\cite{TmWnt1},
under extensions by Theorem~4.3 of~\cite{TmWnt1},
and under direct limits indexed by $\N$
by Corollary~3.4 of~\cite{TmWnt1}.
\end{proof}

\begin{lem}\label{L_7812_UCT}
Each of the following properties is separably stable,
separably passes to ideals,
and separably admits largest ideals:
\begin{enumerate}
\item\label{L_7812_UCT_AF}
Being~AF.
\item\label{L_7812_UCT_ATZ}
Being an AT~algebra with real rank zero.
\item\label{L_7812_UCT_UCT}
All ideals satisfy the Universal Coefficient Theorem.
\item\label{L_7812_UCT_DSt}
$D$-stability for a \ssa{} \ca~$D$.
\end{enumerate}
\end{lem}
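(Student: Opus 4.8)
The plan is to handle the four properties in two groups. Properties~(\ref{L_7812_UCT_UCT}) and~(\ref{L_7812_UCT_DSt}) are covered by the machinery already set up: by Lemma~\ref{L_7826_DSt} each satisfies the hypotheses of Corollary~\ref{C_7826_Sep}, so Corollary~\ref{C_7828_SepGoodLg} shows each separably admits largest ideals. That property~(\ref{L_7812_UCT_UCT}) separably passes to ideals is immediate from its formulation, since an ideal of an ideal of~$A$ is an ideal of~$A$. That property~(\ref{L_7812_UCT_DSt}) separably passes to ideals follows because, $D$ being simple and nuclear, an isomorphism $\ph \colon A \to A \otimes D$ carries a given ideal $J \subseteq A$ onto one of the form $J_0 \otimes D$, and $J_0 \otimes D$ is $D$-stable since $D \otimes D \cong D$; hence $J \cong J_0 \otimes D$ is $D$-stable. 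For separable stability, observe that a stable isomorphism $K \otimes B \cong K \otimes A$ induces an isomorphism between the ideal lattices of $A$ and of $B$ (each being the ideal lattice of $K \otimes A$) under which corresponding ideals of $A$ and $B$ have isomorphic stabilizations; since satisfying the Universal Coefficient Theorem and being $D$-stable are each invariant under stable isomorphism of separable \ca{s}, both properties are separably stable.

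For properties~(\ref{L_7812_UCT_AF}) and~(\ref{L_7812_UCT_ATZ}) I would argue using structure theory. That being~AF passes to ideals and is separably stable is classical: ideals and $\sigma$-unital hereditary subalgebras of an AF algebra are AF, and the stabilization of an AF algebra is AF; for an AT algebra with real rank zero the corresponding statements follow from Elliott's classification of such algebras, after checking that real rank zero, torsion-freeness of the $K$-groups, the property that $K_0$ is a dimension group, nuclearity, and membership in the bootstrap class all behave as required under passage to ideals, hereditary subalgebras, and stabilizations. Separable stability then reduces, as above, to the fact that $B$ is isomorphic to a (full) corner of $K \otimes B \cong K \otimes A$, which is AF, respectively AT with real rank zero, whenever $A$ is. To see that these properties separably admit largest ideals, I would verify the hypotheses of Corollary~\ref{C_7826_Sep}: the zero algebra qualifies; quotients of AF (respectively real-rank-zero AT) algebras are again of the same kind; and the closure of an increasing sequence of ideals each of which is AF (respectively real-rank-zero AT) is a countable direct limit of such algebras, hence again of that kind.

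The substantial condition, and the step I expect to be the main obstacle, is closure under extensions. For~(\ref{L_7812_UCT_AF}) one must show that if $0 \to I \to A \to A/I \to 0$ is exact with $A$ separable and $I$ and $A/I$ both AF, then $A$ is AF; the heart of this is the (positive) solution of the projection lifting problem for AF ideals, which lets one lift projections, and then finite systems of matrix units, of $A/I$ to~$A$, so that finite dimensional subalgebras of $A/I$ lift to finite dimensional subalgebras of~$A$, and, combining these lifts with an approximate identity of projections for~$I$ and intertwining, one produces an increasing sequence of finite dimensional subalgebras of~$A$ with dense union. (As by-products $A$ has real rank zero and stable rank one, and the six term exact sequence gives $K_1(A) = 0$ together with an extension $0 \to K_0(I) \to K_0(A) \to K_0(A/I) \to 0$ of dimension groups.) For~(\ref{L_7812_UCT_ATZ}) one needs instead that an extension of real-rank-zero AT algebras by a real-rank-zero AT algebra is again AT with real rank zero; this I would deduce from Elliott's classification theorem for such algebras, by checking that the extension $A$ is separable, nuclear, of real rank zero (again because projections lift modulo the real rank zero ideal~$I$), lies in the bootstrap class, and has torsion-free $K$-theory with $K_0$ a dimension group, and then identifying its Elliott invariant as that of a real-rank-zero AT algebra.
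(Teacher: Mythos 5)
Your treatment of the Universal Coefficient Theorem property, of $D$-stability, and of the AF property is essentially sound and close in spirit to the paper's: the first two are indeed reduced to Lemma~\ref{L_7826_DSt} and Corollary~\ref{C_7826_Sep}, and for AF the decisive input is, as you say, Brown's projection-lifting theorem that extensions of AF algebras are AF. The genuine gap is in your argument for ``AT algebra with real rank zero,'' where you propose to verify the hypotheses of Corollary~\ref{C_7826_Sep}, in particular closure under arbitrary extensions, asserting that the extension algebra $A$ has real rank zero ``because projections lift modulo the real rank zero ideal $I$.'' That assertion is false. By Theorem~3.14 of~\cite{BnPd0}, real rank zero of $A$ is \emph{equivalent} to real rank zero of $I$ and $A/I$ \emph{together with} the liftability of projections from $A/I$, and the lifting genuinely fails in general: the obstruction to lifting a projection $p$ is the image of $[p]$ under the exponential map $K_0 (A/I) \to K_1 (I)$. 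An AT algebra with real rank zero can have nonzero $K_1$ (for instance a Bunce--Deddens algebra), and one can realize a nonzero exponential map in an extension of $\C$ by the stabilization of such an algebra. The resulting $A$ is an extension of one real-rank-zero AT algebra by another which does not itself have real rank zero, hence is not a real-rank-zero AT algebra. So this class is not closed under extensions, condition~(\ref{C_7826_Sep_Ext}) of Corollary~\ref{C_7826_Sep} fails, and your route cannot be completed. (Appealing to Elliott's classification at the end does not help, since the real rank zero hypothesis of that classification is exactly what is missing.)

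The paper circumvents this by invoking Lemma~\ref{L_7813_ExtCtForDSepMaxx} instead, which requires closure only under sums $I + J$ of two ideals of a common algebra, not under arbitrary extensions. For such sums the lifting obstruction does not arise: real rank zero (and stable rank one) of $I + J$ is supplied directly by Proposition 2.14 of~\cite{BP2}, and only \emph{then} is the Lin--R{\o}rdam extension theorem (Theorem~5 of~\cite{LnRrd}) applied to the sequence $0 \to I \to I + J \to J / (I \cap J) \to 0$; the hypothesis of that theorem is precisely that the total algebra of the extension already has real rank zero. To repair your argument you should replace the appeal to Corollary~\ref{C_7826_Sep} for this property by this narrower sum-of-ideals criterion.
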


\begin{proof}
Stability is well known for the AF property.
For AT algebras with real rank zero,
it is clear that if $A$ is one then so is $K \otimes A$,
while the reverse implication follows
from Proposition~3 of~\cite{LnRrd}.
Separable stability for the Universal Coefficient Theorem
is well known,
and for the property we are considering
one adds the fact that stable isomorphism of algebras
implies stable isomorphism of their ideals.
Separable stability for the property of being $D$-absorbing
follows from Corollary~3.2 of~\cite{TmWnt1},
noting the blanket assumption at the
beginning of Section~3 there
and using Remark~3.3 of~\cite{Wn}
to see that the $K_1$-injectivity hypothesis
is automatic.

It is well known that ideals in AF~algebras are~AF.
Ideals in AT algebras with real rank zero are
AT algebras with real rank zero
by Proposition~3 of~\cite{LnRrd}.
For the property that all ideals in a separable \ca{}
satisfy the Universal Coefficient Theorem,
passage to ideals is built into the definition.
For $D$-absorbing,
use Corollary~3.3 of~\cite{TmWnt1},
with the same remarks as in the previous paragraph.

The properties of having all
ideals satisfy the Universal Coefficient Theorem
and being $D$-absorbing
separably admit largest ideals
by Lemma~\ref{L_7826_DSt}
and Corollary~\ref{C_7826_Sep}.

To prove that the other two properties separably admit largest ideals,
we use Lemma~\ref{L_7813_ExtCtForDSepMaxx}.
The zero algebra has all the properties.
We next consider the sum $I + J$
of ideals $I$ and $J$ in a \ca{} which have one of these properties.
We use the exact sequence
\begin{equation}\label{Eq_7813_SumSeq}
0 \longrightarrow I
\longrightarrow I + J
\longrightarrow J / (I \cap J)
\longrightarrow 0.
\end{equation}
In the AF case,
$J / (I \cap J)$ is a quotient of an AF~algebra
and hence AF.
So $I + J$ is AF because extensions of AF~algebras are AF
(\cite{Brn12}).
In the AT and real rank zero case,
we know that $I$ and $J$ have real rank zero
(by assumption)
and stable rank one
(true for all AT~algebras).
So $I + J$ has real rank zero by Proposition 2.14(iii) of~\cite{BP2}
and stable rank one by Proposition 2.14(ii) of~\cite{BP2}.
Also, $J / (I \cap J)$ is an AT~algebra with real rank zero,
because AT passes to quotients
(Proposition 2(ii) of~\cite{LnRrd})
and and real rank zero does too
(Theorem 3.14 of~\cite{BnPd0}).
So Theorem~5 of~\cite{LnRrd} implies that
$I + J$ is an AT~algebra with real rank zero.

It is well known that direct limits of sequences
of AF algebras are AF.
For AT algebras,
this is Proposition 2(i) of~\cite{LnRrd},
and for real rank zero it is Proposition 3.1 of~\cite{BnPd0}.
\end{proof}

\section{Largest ideals, fixed point algebras, and crossed
 products}\label{Sec_Consq}

\indent
The main results of this paper are Theorem~\ref{T_7809_FixedToCP}
and its corollary, Corollary~\ref{C_7812_MainCor},
which state that for properties of the types we are considering,
the fixed point algebra under a
second countable compact abelian group action
has the property \ifo{} the crossed product does.
Restricting to finite abelian groups,
this result is interesting primarily
when crossed products do not preserve the
property in question,
so nothing better can be expected.
In cases in which such crossed products do preserve the property,
or it is at least expected that they do,
we can say something else of interest.
See Theorem~\ref{T_7813_Equal},
where we show that in this case
$C^* (G, \, \LI{P}{A}, \, \af) = \LI{P}{C^* (G, A, \af)}$
and $\LI{P}{A} = \LI{P}{C^* (G, A, \af)} \cap A$.

\begin{lem}\label{L_7809_Invariance}
Let $(P)$ be a property of \ca{s}
that is isomorphism invariant and (separably) admits largest ideals.
Let $A$ be a (separable) \ca,
let $G$ be a group,
and let $\af \colon G \to \Aut (A)$
be an action of $G$ on~$A$.
Then $\LI{P}{A}$
is $\af$-invariant.
\end{lem}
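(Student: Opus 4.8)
The plan is to use the defining universal property of the largest ideal together with the fact that each $\af_g$ is an automorphism, hence sends ideals with $(P)$ to ideals with $(P)$. Fix $g \in G$ and write $L = \LI{P}{A}$. First I would observe that $\af_g(L)$ is an ideal of $A$: since $\af_g$ is a $*$-automorphism of $A$, it carries the closed two-sided ideal $L$ onto another closed two-sided ideal. Next, because $\af_g \colon L \to \af_g(L)$ is an isomorphism of \ca{s} and $L$ has $(P)$, isomorphism invariance of $(P)$ gives that $\af_g(L)$ has $(P)$. By the defining property of the largest ideal (see the explicit description following Definition~\ref{D_7809_Props}), any ideal of $A$ with $(P)$ is contained in $L$, so $\af_g(L) \subseteq L$.

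The reverse inclusion is obtained by applying the same reasoning to $\af_g^{-1} = \af_{g^{-1}}$, which is again an automorphism of $A$: we get $\af_{g^{-1}}(L) \subseteq L$, and applying $\af_g$ yields $L \subseteq \af_g(L)$. Combining the two inclusions gives $\af_g(L) = L$ for every $g \in G$, which is exactly the statement that $L$ is $\af$-invariant. In the separable case there is nothing extra to do: $A$ separable forces $L$ separable (a closed subspace of a separable space), so the separable version of "admits largest ideals" applies verbatim, and the same two-inclusion argument goes through.

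There is essentially no obstacle here; the only point worth care is making sure $(P)$ is only being used through isomorphism invariance and the existence of largest ideals, both of which are hypotheses, and not through any of the stronger closure conditions of Section~\ref{Sec_Def}. In particular, one should not invoke stability or passage to ideals, since the lemma is stated under the weaker hypotheses and will be applied in that generality. A one-line remark that $\af_g$ restricts to an isomorphism $L \xrightarrow{\ \cong\ } \af_g(L)$ and that $\af_g$ preserves the lattice of ideals is all the machinery required.
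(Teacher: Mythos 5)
Your proof is correct and is essentially the paper's own argument: $\af_g(\LI{P}{A})$ is an ideal with $(P)$ by isomorphism invariance, hence contained in $\LI{P}{A}$ by maximality. The paper's proof stops at this one-line containment (equality following automatically since $g$ ranges over a group); your explicit treatment of the reverse inclusion via $\af_{g^{-1}}$ is just a spelled-out version of the same point.
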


\begin{proof}
Let $g \in G$.
Then $\af_g (\LI{P}{A})$ has~$(P)$,
so $\af_g (\LI{P}{A}) \subseteq \LI{P}{A}$
by definition.
\end{proof}

\begin{thm}\label{T_7809_FixedToCP}
Let $(P)$ be a property of \ca{s}
that separably passes to ideals,
is separably stable, and separably admits largest ideals.
Let $A$ be a separable \ca,
let $G$ be a second countable compact abelian group,
and let $\af \colon G \to \Aut (A)$
be an action of $G$ on~$A$.
Then $A^{\af}$ has~$(P)$ \ifo{} $C^* (G, A, \af)$ has~$(P)$.
\end{thm}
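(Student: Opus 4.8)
The plan is to exploit the Takai-type picture of the dual action together with the fact that $A^{\af}$ sits as a corner in $C^* (G, A, \af)$, and to run the argument through the largest ideal $\LI{P}{C^* (G, A, \af)}$. Write $B = C^* (G, A, \af)$ and let $\widehat{\af} \colon \widehat{G} \to \Aut (B)$ be the dual action, where $\widehat{G}$ is a countable discrete abelian group since $G$ is second countable compact abelian.

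First suppose $A^{\af}$ has $(P)$. Since $A^{\af}$ is (isomorphic to) a full corner of the fixed point algebra $B^{\widehat{\af}}$ of the dual action — equivalently, $B^{\widehat{\af}}$ and $A^{\af}$ are stably isomorphic, being corners of $C^* (\widehat{G}, B, \widehat{\af})$ which by Takai duality is $K \otimes A^{\af}$ up to the usual identifications — separable stability of $(P)$ gives that $B^{\widehat{\af}}$ has $(P)$. Now $B^{\widehat{\af}} \subseteq B$ is a corner (it is $pBp$ for the projection $p = $ integration over $\widehat{G}$, or in the discrete picture the relevant subalgebra), but more usefully, $A = B^{\widehat{\af}}$ under the canonical identification of $A$ inside $C^* (G, A, \af)$... here I would instead use the cleaner route: by Takai duality $C^* (\widehat{G}, B, \widehat{\af}) \cong K \otimes A$, and $B$ embeds as the fixed point algebra $\big( C^* (\widehat{G}, B, \widehat{\af}) \big)^{\widehat{\widehat{\af}}}$, so $B$ is stably isomorphic to a corner of $K \otimes A$. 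That does not immediately help unless $A$ has $(P)$. So the honest plan: $A^{\af}$ is a corner in $B$, hence $K \otimes A^{\af} \cong K \otimes (\text{a hereditary subalgebra generated by } A^{\af})$; let $I \subseteq B$ be the ideal generated by $A^{\af}$. Then the corner $A^{\af}$ is full in $I$, so $K \otimes I \cong K \otimes A^{\af}$, and separable stability gives $I$ has $(P)$. Therefore $I \subseteq \LI{P}{B}$.

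The crux is then to show $\LI{P}{B} = B$, i.e. $B$ itself has $(P)$. By Lemma~\ref{L_7809_Invariance} applied to the dual action $\widehat{\af}$ of the (countable, hence a suitable directed-union-of-finite-subgroups or at least a group to which the lemma applies) group $\widehat{G}$, the ideal $\LI{P}{B}$ is $\widehat{\af}$-invariant. Consider $Q = B / \LI{P}{B}$ with the induced dual action $\overline{\widehat{\af}}$. One computes $Q^{\overline{\widehat{\af}}}$: since taking $\widehat{G}$-fixed points is exact here (averaging over the compact dual when $\widehat G$ is given the appropriate structure — or, in the discrete case, because the fixed-point functor for the dual action is exact as it is a corner-compression by a central projection in the multiplier algebra, cf.\ the conditional expectation), the fixed point algebra of $Q$ is $B^{\widehat{\af}} / (\LI{P}{B})^{\widehat{\af}}$, which is a quotient of $B^{\widehat{\af}}$ and hence, being stably isomorphic to a quotient of $A^{\af}$... wait: I need that $B^{\widehat{\af}} = $ (up to stable iso) $A^{\af}$, which holds. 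The point is that $A^{\af} \subseteq I \subseteq \LI{P}{B}$ forces the image of $A^{\af}$ in $Q$ to be zero, so the image of $B^{\widehat{\af}}$ in $Q$ is contained in the image of $\LI{P}{B}$, which is zero; hence $Q^{\overline{\widehat{\af}}} = 0$. But a crossed product (or here, a dual-action C*-algebra) in which the fixed point algebra of the dual action is zero must itself be zero: the conditional expectation $E \colon Q \to Q^{\overline{\widehat{\af}}}$ is faithful, so $Q^{\overline{\widehat{\af}}} = 0$ implies $Q = 0$. Thus $\LI{P}{B} = B$ and $B$ has $(P)$.

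For the reverse implication — if $B = C^* (G, A, \af)$ has $(P)$, then $A^{\af}$ has $(P)$ — this is the easy direction noted in the introduction: $A^{\af}$ is isomorphic to a corner of $B$ (as in \cite{Rs}), so $K \otimes A^{\af}$ is isomorphic to a full corner, hence to $K \otimes (\text{an ideal of } B)$; since $(P)$ separably passes to ideals and is separably stable, $A^{\af}$ inherits $(P)$. The main obstacle I anticipate is the exactness of the fixed-point construction under the dual action and the precise bookkeeping identifying $B^{\widehat{\af}}$ (or the relevant corner) with $A$ / $A^{\af}$ compatibly with ideals — i.e.\ justifying that passing to the quotient $Q = B/\LI{P}{B}$ commutes with taking dual-action fixed points and that the faithful conditional expectation argument applies; the role of second countability of $G$ (countability of $\widehat{G}$) and separability of $A$ is exactly to keep $\LI{P}{(-)}$ available at each stage.
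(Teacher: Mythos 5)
Your reverse direction (crossed product has $(P)$ $\Rightarrow$ $A^{\af}$ has $(P)$) and the first half of your forward direction are exactly the paper's argument: embed $A^{\af}$ as a corner via \cite{Rs}, let $J$ be the ideal it generates, use Brown's theorem to get $K \otimes J \cong K \otimes A^{\af}$, conclude $J \subseteq \LI{P}{C^*(G,A,\af)}$, and observe via Lemma~\ref{L_7809_Invariance} that $\LI{P}{C^*(G,A,\af)}$ is invariant under the dual action $\widehat{\af}$ of the discrete group $\widehat{G}$. Up to there you are on track.

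The final step, however, has a genuine gap. You pass to $Q = C^*(G,A,\af) / \LI{P}{C^*(G,A,\af)}$ and argue that $Q^{\overline{\widehat{\af}}} = 0$ forces $Q = 0$ via a faithful conditional expectation onto the fixed point algebra of the dual action. But $\widehat{G}$ is a \emph{discrete} (generally infinite) group, and there is no conditional expectation onto the fixed point algebra of a discrete infinite group action; there is no averaging, and no ``central projection in the multiplier algebra'' implementing such an expectation. Likewise, the fixed-point functor for a discrete group action is only left exact, so $Q^{\overline{\widehat{\af}}}$ need not equal $B^{\widehat{\af}} / (\LI{P}{B})^{\widehat{\af}}$. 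Even the identification of $B^{\widehat{\af}}$ with something stably isomorphic to $A^{\af}$ is not right: the copy of $A^{\af}$ given by constant functions in $C(G,A) \subseteq C^*(G,A,\af)$ is \emph{not} fixed by $\widehat{\af}$, since $\widehat{\af}_\chi$ multiplies a function by the character $\chi$. (The clean statement ``$B^{\widehat{\af}} = A$'' belongs to the opposite setting, where the acting group is discrete and its dual is compact.) What the paper uses instead, and what your argument is missing, is Theorem~3.4 of \cite{GtLz2}: every $\widehat{\af}$-invariant ideal $L$ of $C^*(G,A,\af)$ has the form $C^*(G,T,\af)$ for a $G$-invariant ideal $T \subseteq A$. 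Combined with Lemma~3.1 of \cite{GtLz3} --- if $T \neq A$ then some $x \in A^{\af}$ lies outside $T$, so its image under $\ph$ lies outside $C^*(G,T,\af) = L$, contradicting $\ph(x) \in J \subseteq L$ --- this shows the only $\widehat{\af}$-invariant ideal containing $J$ is the whole crossed product. Your structural intuition (a dual-invariant ideal containing the image of $A^{\af}$ must be everything) is correct, but it needs this duality-theoretic input rather than an expectation argument.
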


The proof is inspired by the proof of
Theorem~3.2 of~\cite{GtLz3}.

\begin{proof}[Proof of Theorem~\ref{T_7809_FixedToCP}]
We may assume that $A \neq 0$.

We identify $C (G, A)$ with multiplication given by
convolution with a dense subalgebra of $C^* (G, A, \af)$
in the usual way.
We assume that Haar measure on~$G$ has been normalized
to have total mass~$1$.
Define $\ph \colon A^{\af} \to C^* (G, A, \af)$
by sending $a \in A^{\af}$
to the constant function with value~$a$
in $C (G, A) \S C^* (G, A, \af)$.
Apply the result of~\cite{Rs}
to see that $\ph$
is an isomorphism from $A^{\af}$ to a corner $B$ of $C^* (G, A, \af)$.
Let $J \subseteq C^* (G, A, \af)$
be the ideal generated by~$B$.
Since $C^* (G, A, \af)$ is separable,
it follows from
Theorem 2.8 of~\cite{Bwn} that $J$ is stably isomorphic to $A^{\af}$.

Assume first that $C^* (G, A, \af)$ has~$(P)$.
Then $J$ has~$(P)$
since $(P)$ separably passes to ideals.
So $A^{\af}$ has~$(P)$,
since $A^{\af}$ is stably isomorphic to~$J$.

Now assume that $A^{\af}$ has~$(P)$.
Set $I = \LI{P}{C^* (G, A, \af)}$.
We want to show that $I = C^* (G, A, \af)$.
The ideal $J$ has~$(P)$
by stability,
so $J \subseteq I$.
Lemma~\ref{L_7809_Invariance} implies that
$I$ is invariant under the dual action~${\widehat{\af}}$.
So it is enough to show that
if $L \subseteq C^* (G, A, \af)$ is an ${\widehat{\af}}$-invariant ideal
which contains~$J$ then $L =  C^* (G, A, \af)$.

Theorem 3.4 of~\cite{GtLz2} provides
a $G$-invariant ideal $T \subseteq A$ such that $L = C^* (G, T, \af)$.
If $T \neq A$, then, using Lemma 3.1 of~\cite{GtLz3},
find $x \in A^{\af}$ such that $x \notin T$,
so that $\ph (x) \notin C^* (G, T, \af) = L$.
But $\ph (x) \in B \subseteq J \subseteq L$
by definition.
This contradiction shows that $T = A$, so $L = C^* (G, A, \af)$.
\end{proof}

We don't know whether Theorem~\ref{T_7809_FixedToCP}
holds for compact nonabelian groups.
To use the same method of proof,
one would need a version of Lemma~\ref{L_7809_Invariance}
for coactions.
For the property type~I,
this has been done in Section~2 of~\cite{GtLz3},
and yields Theorem~3.2 there.

\begin{cor}\label{C_7812_MainCor}
Let $(P)$ be any of the properties
in Proposition~\ref{P_7809_IsMaxx},
Remark~\ref{R_7809_ResidHer},
or Lemma~\ref{L_7812_UCT}.
Let $A$ be a separable \ca,
let $G$ be a second countable compact abelian group,
and let $\af \colon G \to \Aut (A)$
be an action of $G$ on~$A$.
Then $A^{\af}$ has~$(P)$ \ifo{} $C^* (G, A, \af)$ has~$(P)$.
\end{cor}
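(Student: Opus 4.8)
The plan is to observe that Corollary~\ref{C_7812_MainCor} follows immediately from Theorem~\ref{T_7809_FixedToCP} once one checks that every property on the combined list satisfies that theorem's three hypotheses: separably passing to ideals, separable stability, and separably admitting largest ideals. All of the needed verification has already been carried out in Section~\ref{Sec_Max}, so the argument will amount to citing the appropriate statement for each property and then invoking Theorem~\ref{T_7809_FixedToCP}.

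First I would handle the properties listed in Proposition~\ref{P_7809_IsMaxx}. That proposition asserts that each of them is stable, passes to ideals, and admits largest ideals in the unrestricted sense. Since the separable version of each of these three conditions is formally weaker than its unrestricted version — one merely restricts attention to separable algebras in Definition~\ref{D_7809_Props} — every such property satisfies the hypotheses of Theorem~\ref{T_7809_FixedToCP}, and the conclusion follows for it.

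Next, the properties collected in Remark~\ref{R_7809_ResidHer} are all of the form ``residually hereditarily in a fixed upwards directed class of \ca{s},'' hence are instances of Proposition~\ref{P_7809_IsMaxx}(\ref{P_7809_IsMaxx_ResidHer}) and are therefore already covered by the preceding step. Finally, Lemma~\ref{L_7812_UCT} states outright that each of its four properties is separably stable, separably passes to ideals, and separably admits largest ideals, which is exactly what Theorem~\ref{T_7809_FixedToCP} requires; applying that theorem completes the proof in those cases as well.

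There is no genuine obstacle to overcome here: the mathematical content is entirely contained in Theorem~\ref{T_7809_FixedToCP} together with the structural results of Section~\ref{Sec_Max}. The only point that warrants a moment's care is the bookkeeping observation that the unrestricted conditions appearing in Proposition~\ref{P_7809_IsMaxx} imply their separable analogues, so that a single appeal to Theorem~\ref{T_7809_FixedToCP} is legitimate for every property on the list.
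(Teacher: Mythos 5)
Your proof is correct and follows exactly the paper's route: the paper's own proof of this corollary is a one-line combination of Theorem~\ref{T_7809_FixedToCP} with Proposition~\ref{P_7809_IsMaxx}, Remark~\ref{R_7809_ResidHer} (via Proposition \ref{P_7809_IsMaxx}(\ref{P_7809_IsMaxx_ResidHer})), and Lemma~\ref{L_7812_UCT}, as appropriate. Your added remark that the unrestricted versions of the three conditions in Proposition~\ref{P_7809_IsMaxx} formally imply their separable analogues is a correct and worthwhile piece of bookkeeping that the paper leaves implicit.
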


\begin{proof}
Combine Theorem~\ref{T_7809_FixedToCP}
with, as appropriate, Proposition~\ref{P_7809_IsMaxx},
Proposition \ref{P_7809_IsMaxx}(\ref{P_7809_IsMaxx_ResidHer})
and Remark~\ref{R_7809_ResidHer},
or Lemma~\ref{L_7812_UCT}.
\end{proof}

Corollary~\ref{C_7812_MainCor}
is not new for some of these properties,
at least for finite abelian groups~$G$.
Indeed,
for type~I, nuclearity,
the weak ideal property,
topological dimension zero
(when $A$ is separable),
and,
when $G$ is a finite abelian $2$-group,
for all of the properties in Remark~\ref{R_7809_ResidHer},
much more is already known.
Specifically,
for an action $\af \colon G \to \Aut (A)$,
\tfae:
\begin{enumerate}
\item\label{R_8712_Aaf}
$A^{\af}$ has~$(P)$.
\item\label{R_8712_A}
$A$ has~$(P)$.
\item\label{R_8712_CGAa}
$C^* (G, A, \af)$ has~$(P)$.
\end{enumerate}
For type~I and nuclearity,
this is well known.
For the weak ideal property,
(\ref{R_8712_Aaf}) implies~(\ref{R_8712_A})
by Theorem 8.9 in~\cite{PsnPh2},
(\ref{R_8712_A}) implies~(\ref{R_8712_CGAa})
by Corollary 8.10 of~\cite{PsnPh2},
and (\ref{R_8712_CGAa}) implies~(\ref{R_8712_Aaf})
because the weak ideal property passes to hereditary subalgebras
(use Proposition 5.10 of~\cite{PsnPh2})
and $A^{\af}$ is a hereditary subalgebra
of $C^* (G, A, \af)$
(by~\cite{Rs}).
For topological dimension zero,
assuming $A$ is separable,
the chain of implications is the same,
but now uses,
in the same order as above,
Theorem 3.6 of~\cite{PsnPh3},
Theorem 3.17 of~\cite{PsnPh1},
Lemma 3.3 of~\cite{PsnPh1}, and~\cite{Rs}.
When $G$ is a finite abelian $2$-group,
all the properties under consideration have the form
``residually hereditarily in an
upwards directed class of C*-algebras'',
and we use the same reasoning,
now applying, in order,
Theorem 3.2(ii) of~\cite{PsnPh3},
Corollary 3.3(ii) of~\cite{PsnPh3},
Proposition 5.10(2) of~\cite{PsnPh2}, and~\cite{Rs}.
We expect the restriction to $2$-groups to be unnecessary.

As often,
the ideal property does not fit.
Takai duality and Example~2.7 of~\cite{PsnPh1}
combine to
give an example in which (\ref{R_8712_A})
and~(\ref{R_8712_CGAa}) above hold,
but (\ref{R_8712_Aaf}) fails.

For many of the other properties
in Proposition~\ref{P_7809_IsMaxx},
the stronger result is definitely known to fail.
See the main result
(in Section~5) of~\cite{Bl0}
for AF,
Example~9 of~\cite{Ell3} for real rank zero,
Example 8.2.1 of~\cite{Bl0} for stable rank one,
and Theorem~4.8 of~\cite{Iz1} for all ideals have zero K-theory,
all ideals have torsion K-theory,
and the combination of real rank zero
and all ideals have zero or torsion $K_0$-groups.
The example for real rank zero
involves an AF algebra,
in particular, an AT algebra,
so shows that the stronger result fails for the condition of being
an AT~algebra with real rank zero.

The stronger result fails for stability under tensoring
with the strongly selfabsorbing \ca{}
$\bigotimes_{n = 1}^{\I} M_3$,
by Example 4.11 of~\cite{PhT4}.
It follows from the discussion there that the stronger result
in fact fails for stability under tensoring with any UHF algebra
of infinite type except possibly $\bigotimes_{n = 1}^{\I} M_2$.
Tensoring everything with ${\mathcal{O}}_{\I}$
(with the trivial action of the group where an action is needed),
one checks that the stronger result
fails for $D$-stability whenever $D$ is the tensor product
of ${\mathcal{O}}_{\I}$ and a UHF algebra
of infinite type except possibly $\bigotimes_{n = 1}^{\I} M_2$.
Presumably it also fails for
$\bigotimes_{n = 1}^{\I} M_2$
and ${\mathcal{O}}_{\I} \otimes \bigotimes_{n = 1}^{\I} M_2$.
The already cited Theorem~4.8 of~\cite{Iz1}
shows that the stronger result fails
for ${\mathcal{O}}_{2}$-stability.
We don't know about $Z$-stability or ${\mathcal{O}}_{\I}$-stability.

It is an apparently difficult open problem
whether pure infiniteness
and strong pure infiniteness
are preserved by crossed products by finite groups,
even for the special case $\Z / 2 \Z$.

For properties which are preserved by crossed products,
we can relate the largest ideal in the algebra
with the given property
to the largest ideal in the crossed product
with the same property.
We require that the group be discrete,
so that condition~(\ref{T_7811_IfCPPrsv_Inter})
in the next theorem
makes sense.
This is no great loss,
since very few properties are preserved by arbitrary
crossed products by a fixed compact group which is not discrete.
The proof works without finiteness of the group,
so we state the theorem in that generality,
but also very few properties are preserved by arbitrary
crossed products by a fixed infinite discrete group.
When the group is finite and the property is stable, we do better:
Theorem~\ref{T_7813_Equal}
shows that we get equality in parts (\ref{T_7811_IfCPPrsv_Contain})
and~(\ref{T_7811_IfCPPrsv_Inter}).

\begin{thm}\label{T_7811_IfCPPrsv}
Let $(P)$ be a property of \ca{s}
that is isomorphism invariant and admits largest ideals.
Let $G$ be a discrete abelian group.
Then \tfae:
\begin{enumerate}
\item\label{T_7811_IfCPPrsv_Prsv}
Whenever $A$ is a \ca{} with~$(P)$
and $\af \colon G \to \Aut (A)$
is any action of $G$ on~$A$,
then $C^* (G, A, \af)$ also has~$(P)$.
\item\label{T_7811_IfCPPrsv_Contain}
Whenever $A$ is a \ca{}
and $\af \colon G \to \Aut (A)$
is any action of $G$ on~$A$,
then $C^* (G, \, \LI{P}{A}, \, \af) \subseteq \LI{P}{C^* (G, A, \af)}$.
\item\label{T_7811_IfCPPrsv_Inter}
Whenever $A$ is a \ca{}
and $\af \colon G \to \Aut (A)$
is any action of $G$ on~$A$,
then $\LI{P}{A} \subseteq \LI{P}{C^* (G, A, \af)} \cap A$.
\setcounter{TmpEnumi}{\value{enumi}}
\end{enumerate}
\end{thm}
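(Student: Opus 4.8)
The plan is to prove the cycle $(\ref{T_7811_IfCPPrsv_Prsv}) \Rightarrow (\ref{T_7811_IfCPPrsv_Contain}) \Rightarrow (\ref{T_7811_IfCPPrsv_Inter}) \Rightarrow (\ref{T_7811_IfCPPrsv_Prsv})$. For $(\ref{T_7811_IfCPPrsv_Prsv}) \Rightarrow (\ref{T_7811_IfCPPrsv_Contain})$, let $A$ and $\af$ be given and write $L = \LI{P}{A}$. By Lemma~\ref{L_7809_Invariance}, $L$ is $\af$-invariant, so $\af$ restricts to an action on~$L$ and we may form $C^* (G, L, \af) \subseteq C^* (G, A, \af)$ (an ideal, since $L$ is an invariant ideal). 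By hypothesis~(\ref{T_7811_IfCPPrsv_Prsv}) applied to the restricted action, $C^* (G, L, \af)$ has~$(P)$. Since it is an ideal in $C^* (G, A, \af)$ with~$(P)$, it is contained in the largest such ideal, i.e.\ $C^* (G, L, \af) \subseteq \LI{P}{C^* (G, A, \af)}$.

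The implication $(\ref{T_7811_IfCPPrsv_Contain}) \Rightarrow (\ref{T_7811_IfCPPrsv_Inter})$ is a matter of intersecting with~$A$. Here I use that $A$ sits inside $C^* (G, A, \af)$ as the constant functions (using that $G$ is discrete, so $C_c(G, A)$ contains $A$), and that under this identification $C^* (G, L, \af) \cap A = L$: the $\delta_e$-component of an element of $C^*(G, L, \af)$ lies in~$L$, which handles the inclusion $\subseteq$, while $L \subseteq C^*(G,L,\af)$ is clear. Combining with $(\ref{T_7811_IfCPPrsv_Contain})$ gives $\LI{P}{A} = L = C^*(G,L,\af) \cap A \subseteq \LI{P}{C^* (G, A, \af)} \cap A$.

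For $(\ref{T_7811_IfCPPrsv_Inter}) \Rightarrow (\ref{T_7811_IfCPPrsv_Prsv})$, suppose $A$ itself has~$(P)$, so that $\LI{P}{A} = A$. Then~(\ref{T_7811_IfCPPrsv_Inter}) reads $A \subseteq \LI{P}{C^* (G, A, \af)} \cap A$, which forces $\LI{P}{C^* (G, A, \af)} \cap A = A$, i.e.\ $A \subseteq \LI{P}{C^* (G, A, \af)}$. But the only ideal of $C^* (G, A, \af)$ containing the copy of~$A$ is the whole algebra — any ideal containing $A$ (the constant functions) contains all of $C_c(G,A)$, hence is dense, hence is all of $C^*(G, A, \af)$ — so $\LI{P}{C^* (G, A, \af)} = C^* (G, A, \af)$, which has~$(P)$ by definition of the largest ideal.

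The main obstacle is the bookkeeping in the second implication: one must be careful that the identification of $A$ as a subalgebra of $C^* (G, A, \af)$ (via constant functions / the $\delta_e$ coefficient) is the same one implicitly used in statement~(\ref{T_7811_IfCPPrsv_Inter}), and that $C^* (G, L, \af) \cap A = L$ under it. This is where discreteness of~$G$ is essential — it is what lets $A$ and $C_c(G,A)$ literally sit inside the crossed product — and it is the reason the theorem is stated for discrete~$G$. Everything else is routine: the key structural inputs are Lemma~\ref{L_7809_Invariance} (invariance of the largest ideal) and the elementary fact that an ideal of a crossed product containing the constant functions $A$ is everything.
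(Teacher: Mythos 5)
Your proof is correct, and the first two implications coincide with the paper's (for $(\ref{T_7811_IfCPPrsv_Contain})\Rightarrow(\ref{T_7811_IfCPPrsv_Inter})$ you in fact prove the full equality $C^*(G,L,\af)\cap A=L$ via the canonical conditional expectation, whereas only the trivial inclusion $L=L\delta_e\subseteq C^*(G,L,\af)\cap A$ is needed). The genuine divergence is in $(\ref{T_7811_IfCPPrsv_Inter})\Rightarrow(\ref{T_7811_IfCPPrsv_Prsv})$: the paper invokes Lemma~\ref{L_7809_Invariance} to see that $\LI{P}{C^*(G,A,\af)}$ is invariant under the dual action and then applies Theorem~3.4 of~\cite{GtLz2} to write it as $C^*(G,J,\af)$ for an $\af$-invariant ideal $J\subseteq A$, concluding $J=A$ from $A\subseteq\LI{P}{C^*(G,A,\af)}$. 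You instead observe that any closed ideal of $C^*(G,A,\af)$ containing the copy $A\delta_e$ of $A$ contains $(ab)\delta_g=(a\delta_e)*(b\delta_g)$ for all $a,b\in A$ and $g\in G$, hence (using an approximate identity) all of $C_c(G,A)$, hence is everything. This is correct and strictly more elementary: it avoids the noncommutative duality machinery entirely and does not even use that $G$ is abelian in that step. What the paper's heavier route buys is reuse: the same Gootman--Lazar identification of dual-invariant ideals with $C^*(G,J,\af)$ is the engine of Theorem~\ref{T_7813_Equal} (and of Theorem~\ref{T_7809_FixedToCP}), so the authors deploy it uniformly; your argument, by contrast, only shows that the one ideal containing $A$ is everything and would not by itself yield the equalities in Theorem~\ref{T_7813_Equal}.
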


Condition~(\ref{T_7811_IfCPPrsv_Contain})
makes sense,
since Lemma~\ref{L_7809_Invariance} implies that
$\LI{P}{A}$ is $\af$-invariant.

\begin{proof}[Proof of Theorem~\ref{T_7811_IfCPPrsv}]
Assume~(\ref{T_7811_IfCPPrsv_Prsv});
we prove~(\ref{T_7811_IfCPPrsv_Contain}).
Let $A$ be a \ca,
and let $\af \colon G \to \Aut (A)$
be an action of $G$ on~$A$.
Since $\LI{P}{A}$ has~$(P)$,
the hypothesis implies that
the ideal $C^* (G, \, \LI{P}{A}, \, \af)$
of $C^* (G, A, \af)$ has~$(P)$.
So
$C^* (G, \, \LI{P}{A}, \, \af) \subseteq \LI{P}{C^* (G, A, \af)}$
by definition.

Now assume~(\ref{T_7811_IfCPPrsv_Contain});
we prove~(\ref{T_7811_IfCPPrsv_Inter}).
Let $A$ be a \ca,
and let $\af \colon G \to \Aut (A)$
be an action of $G$ on~$A$.
Using~(\ref{T_7811_IfCPPrsv_Contain}) at the second step,
we get
\[
\LI{P}{A}
 \subseteq C^* (G, \, \LI{P}{A}, \, \af) \cap A
 \subseteq \LI{P}{C^* (G, A, \af)} \cap A,
\]
as desired.

Finally, we prove that (\ref{T_7811_IfCPPrsv_Inter})
implies~(\ref{T_7811_IfCPPrsv_Prsv}).
So assume~(\ref{T_7811_IfCPPrsv_Inter}),
let $A$ be a \ca{} with~$(P)$,
and let $\af \colon G \to \Aut (A)$
be an action of $G$ on~$A$.
Since $A = \LI{P}{A}$,
from~(\ref{T_7811_IfCPPrsv_Inter}) we get
\begin{equation}\label{Eq_7811_AIn}
A \subseteq \LI{P}{C^* (G, A, \af)}.
\end{equation}
Also,
$\LI{P}{C^* (G, A, \af)}$ is invariant under the dual action
by Lemma~\ref{L_7809_Invariance},
so Theorem~3.4 of~\cite{GtLz2}
provides an $\af$-invariant ideal $J \subseteq A$
such that
\[
\LI{P}{C^* (G, A, \af)} = C^* (G, J, \af).
\]
Combining this with~(\ref{Eq_7811_AIn})
gives $J = A$.
So $\LI{P}{C^* (G, A, \af)} = C^* (G, A, \af)$,
that is, $C^* (G, A, \af)$ has~$(P)$.
\end{proof}

\begin{thm}\label{T_7813_Equal}
Let $(P)$ be a property of \ca{s}
that is stable and admits largest ideals.
Let $G$ be a finite abelian group.
Then the conditions of Theorem~\ref{T_7811_IfCPPrsv}
are also equivalent to the following:
\begin{enumerate}
\setcounter{enumi}{\value{TmpEnumi}}
\item\label{T_7811_IfCPPrsv_CpEq}
Whenever $A$ is a \ca{}
and $\af \colon G \to \Aut (A)$
is any action of $G$ on~$A$,
then $C^* (G, \, \LI{P}{A}, \, \af) = \LI{P}{C^* (G, A, \af)}$.
\item\label{T_7811_IfCPPrsv_InterEq}
Whenever $A$ is a \ca{}
and $\af \colon G \to \Aut (A)$
is any action of $G$ on~$A$,
then $\LI{P}{A} = \LI{P}{C^* (G, A, \af)} \cap A$.
\end{enumerate}
\end{thm}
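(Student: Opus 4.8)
The plan is to build on Theorem~\ref{T_7811_IfCPPrsv}, whose conditions are already known to be equivalent (note that stability of $(P)$ implies isomorphism invariance, so that theorem applies here), and to enlarge the circle of equivalences so that it also contains~(\ref{T_7811_IfCPPrsv_CpEq}) and~(\ref{T_7811_IfCPPrsv_InterEq}). Since~(\ref{T_7811_IfCPPrsv_CpEq}) trivially implies~(\ref{T_7811_IfCPPrsv_Contain}) and~(\ref{T_7811_IfCPPrsv_InterEq}) trivially implies~(\ref{T_7811_IfCPPrsv_Inter}), it is enough to prove that~(\ref{T_7811_IfCPPrsv_Prsv}) implies~(\ref{T_7811_IfCPPrsv_CpEq}) and that~(\ref{T_7811_IfCPPrsv_CpEq}) implies~(\ref{T_7811_IfCPPrsv_InterEq}).

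For~(\ref{T_7811_IfCPPrsv_Prsv})${}\Rightarrow{}$(\ref{T_7811_IfCPPrsv_CpEq}): the inclusion $C^* (G, \, \LI{P}{A}, \, \af) \subseteq \LI{P}{C^* (G, A, \af)}$ is part~(\ref{T_7811_IfCPPrsv_Contain}), which we already have. For the reverse inclusion, set $I = \LI{P}{C^* (G, A, \af)}$. By Lemma~\ref{L_7809_Invariance}, $I$ is invariant under the dual action~${\widehat{\af}}$, so Theorem~3.4 of~\cite{GtLz2} provides an $\af$-invariant ideal $T \subseteq A$ with $I = C^* (G, T, \af)$; it suffices to show that $T$ has~$(P)$, since then $T \subseteq \LI{P}{A}$ and hence $I \subseteq C^* (G, \, \LI{P}{A}, \, \af)$. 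The idea is to apply Takai duality to the restricted system $(T, G, \af)$: the double crossed product $C^* ({\widehat{G}}, \, C^* (G, T, \af), \, {\widehat{\af}})$ is isomorphic to $T \otimes \cK (L^2 (G))$, which, since $G$ is finite, equals $T \otimes M_n$ with $n = \card (G)$. Because $G$ is finite abelian, ${\widehat{G}} \cong G$, so condition~(\ref{T_7811_IfCPPrsv_Prsv}), hypothesized for~$G$, also holds for~${\widehat{G}}$ (compose an action of ${\widehat{G}}$ with a group isomorphism $G \to {\widehat{G}}$, apply~(\ref{T_7811_IfCPPrsv_Prsv}), and use isomorphism invariance). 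Since $I = C^* (G, T, \af)$ has~$(P)$, it follows that $T \otimes M_n$ has~$(P)$, and then stability of~$(P)$ forces $T$ to have~$(P)$.

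For~(\ref{T_7811_IfCPPrsv_CpEq})${}\Rightarrow{}$(\ref{T_7811_IfCPPrsv_InterEq}): view $A$ as a subalgebra of $C^* (G, A, \af)$ via the canonical embedding $a \mapsto a \delta_e$, which lands in the algebra because $G$ is discrete. The elementary point is that for any $\af$-invariant ideal $T \subseteq A$ one has $C^* (G, T, \af) \cap A = T$: identifying $C^* (G, A, \af)$ with $\bigoplus_{g \in G} A$ (legitimate since $G$ is finite), the ideal $C^* (G, T, \af)$ is $\bigoplus_{g \in G} T$, and the copy of~$A$ sits in the summand indexed by the identity. Applying this with $T = \LI{P}{A}$ and invoking~(\ref{T_7811_IfCPPrsv_CpEq}) gives $\LI{P}{C^* (G, A, \af)} \cap A = C^* (G, \, \LI{P}{A}, \, \af) \cap A = \LI{P}{A}$, which is~(\ref{T_7811_IfCPPrsv_InterEq}). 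Together with the two trivial implications noted above, this closes the loop.

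I expect the only real obstacle to be the first implication, and within it the one nonobvious move: recognizing that condition~(\ref{T_7811_IfCPPrsv_Prsv}) must be applied to the \emph{dual} group~${\widehat{G}}$ --- which is permissible precisely because finite abelian groups are self-dual --- and then using Takai duality to identify the resulting double crossed product as a matrix amplification $T \otimes M_n$, so that stability of~$(P)$ can be brought to bear. The remaining steps (the two trivial implications and the intersection computation in the last paragraph) are routine.
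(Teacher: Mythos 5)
Your proposal is correct and follows essentially the same route as the paper: both use Lemma~\ref{L_7809_Invariance} together with Theorem~3.4 of the Gootman--Lazar paper to write $\LI{P}{C^* (G, A, \af)}$ as $C^* (G, T, \af)$ for an $\af$-invariant ideal~$T$, then apply the preservation hypothesis to the dual action and Takai duality to see that $M_n (T)$, hence $T$ by stability, has~$(P)$, and finally deduce both equalities from $T = \LI{P}{A}$. The only cosmetic difference is that you chain (\ref{T_7811_IfCPPrsv_Prsv})${}\Rightarrow{}$(\ref{T_7811_IfCPPrsv_CpEq})${}\Rightarrow{}$(\ref{T_7811_IfCPPrsv_InterEq}) and make the trivial reverse implications explicit, while the paper derives both new conditions directly.
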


\begin{proof}
It is enough to prove that the conditions
in Theorem~\ref{T_7811_IfCPPrsv}
together imply the conditions here.
So let $A$ be a \ca,
and let $\af \colon G \to \Aut (A)$
be an action of $G$ on~$A$.
Then $\LI{P}{C^* (G, A, \af)}$ is invariant under the dual action
by Lemma~\ref{L_7809_Invariance},
so Theorem~3.4 of~\cite{GtLz2}
provides an $\af$-invariant ideal $J \subseteq A$
such that
\begin{equation}\label{Eq_7813_Start}
\LI{P}{C^* (G, A, \af)} = C^* (G, J, \af).
\end{equation}
In this equation,
take crossed products by the dual action
${\widehat{\af}} \colon
 {\widehat{G}} \to \Aut \big( C^* (G, A, \af) \big)$.
Since ${\widehat{G}} \cong G$
and we are assuming
Theorem \ref{T_7811_IfCPPrsv}(\ref{T_7811_IfCPPrsv_Prsv}),
the left hand side of the result is an algebra with~$(P)$.
Set $n = \card (G)$;
then the right hand side is $M_n (J)$ by Takai duality.
So $M_n (J)$ has~$(P)$.
Therefore $J$ has~$(P)$ by stability.
So $J \subseteq \LI{P}{A}$.
Take crossed products by $G$,
use Theorem \ref{T_7811_IfCPPrsv}(\ref{T_7811_IfCPPrsv_Contain})
at the second step,
and use~(\ref{Eq_7813_Start}) at the third step,
getting
\[
C^* (G, J, \af)
 \subseteq C^* (G, \, \LI{P}{A}, \, \af)
 \subseteq \LI{P}{C^* (G, A, \af)}
 = C^* (G, J, \af).
\]
So $C^* (G, \, \LI{P}{A}, \, \af) = \LI{P}{C^* (G, A, \af)}$,
which is~(\ref{T_7811_IfCPPrsv_CpEq}).
It also follows that $J = \LI{P}{A}$.
Using this at the third step
and~(\ref{Eq_7813_Start}) at the first step,
we get
\[
\LI{P}{C^* (G, A, \af)} \cap A
 = C^* (G, J, \af) \cap A
 = J
 = \LI{P}{A}.
\]
This is~(\ref{T_7811_IfCPPrsv_InterEq}).
\end{proof}

\begin{cor}\label{C_7812_Contain}
\mbox{}
\begin{enumerate}
\item\label{C_7812_Contain_TDZ}
Let $(P)$ be the weak ideal property or topological dimension zero.
Let $G$ be a finite abelian group.
For every action $\af \colon G \to \Aut (A)$ of $G$
on a C*-algebra $A$, we have
\[
\LI{P}{A} = \LI{P}{C^* (G, A, \af)} \cap A.
\]
\item\label{C_7812_Contain_Up}
Let $(P)$ be
any of the properties in Remark~\ref{R_7809_ResidHer}.
Let $G$ be a finite abelian $2$-group.
For every action $\af \colon G \to \Aut (A)$ of $G$
on a C*-algebra $A$, we have
\[
\LI{P}{A} = \LI{P}{C^* (G, A, \af)} \cap A.
\]
\end{enumerate}
\end{cor}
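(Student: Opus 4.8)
The plan is to derive this corollary by checking that each listed property satisfies the hypotheses of Theorem~\ref{T_7813_Equal}, namely being stable and admitting largest ideals, and then invoking condition~(\ref{T_7811_IfCPPrsv_InterEq}) of that theorem. The content to be supplied is therefore (a) the verification of stability and of admitting largest ideals, and (b) the verification that condition~(\ref{T_7811_IfCPPrsv_Prsv}) of Theorem~\ref{T_7811_IfCPPrsv} holds, so that the equivalent condition~(\ref{T_7811_IfCPPrsv_InterEq}) is available.

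For part~(\ref{C_7812_Contain_TDZ}): the weak ideal property is stable and admits largest ideals by Proposition~\ref{P_7809_IsMaxx}(\ref{P_7809_IsMaxx_ResidHer}) together with Remark~\ref{R_7809_ResidHer}(\ref{R_7809_ResidHer_WIP}); topological dimension zero is stable and admits largest ideals by Proposition~\ref{P_7809_IsMaxx}(\ref{P_7809_IsMaxx_TDZ}). For the preservation hypothesis, recall from the discussion following Corollary~\ref{C_7812_MainCor} that for a finite abelian group~$G$ and an action $\af \colon G \to \Aut (A)$, if $A$ has the weak ideal property then so does $C^* (G, A, \af)$ (by Corollary~8.10 of~\cite{PsnPh2}), and similarly for topological dimension zero one passes from $A$ to $C^* (G, A, \af)$ using Theorem~3.17 of~\cite{PsnPh1}. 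Thus Theorem~\ref{T_7811_IfCPPrsv}(\ref{T_7811_IfCPPrsv_Prsv}) holds for these two properties, so Theorem~\ref{T_7813_Equal} applies and gives the displayed equality.

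For part~(\ref{C_7812_Contain_Up}): every property in Remark~\ref{R_7809_ResidHer} has the form ``residually hereditarily in a fixed upwards directed class of \ca{s}'', hence is stable and admits largest ideals by Proposition~\ref{P_7809_IsMaxx}(\ref{P_7809_IsMaxx_ResidHer}). When $G$ is a finite abelian $2$-group, the discussion following Corollary~\ref{C_7812_MainCor} records that $A$ having such a property implies $C^* (G, A, \af)$ does too (via Corollary~3.3(ii) of~\cite{PsnPh3}), so again Theorem~\ref{T_7811_IfCPPrsv}(\ref{T_7811_IfCPPrsv_Prsv}) holds and Theorem~\ref{T_7813_Equal} yields the claim.

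The only genuinely non-routine point is checking the preservation hypothesis~(\ref{T_7811_IfCPPrsv_Prsv}), i.e.\ that these properties pass from $A$ to the crossed product by a finite abelian group (or finite abelian $2$-group); but this is exactly what was already summarized in the paragraphs after Corollary~\ref{C_7812_MainCor}, drawing on~\cite{PsnPh1,PsnPh2,PsnPh3}, so there is nothing further to prove here. Everything else is a direct appeal to the already-established machinery.
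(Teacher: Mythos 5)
Your proposal is correct and follows essentially the same route as the paper: apply Theorem~\ref{T_7813_Equal} after checking stability and admitting largest ideals via Proposition~\ref{P_7809_IsMaxx} and Remark~\ref{R_7809_ResidHer}, and verify preservation under the relevant crossed products using Theorem~3.17 of~\cite{PsnPh1}, Corollary~8.10 of~\cite{PsnPh2}, and Corollary~3.3 of~\cite{PsnPh3}. The citations and the logical structure match the paper's proof exactly.
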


\begin{proof}
We apply Theorem~\ref{T_7813_Equal}.
The properties involved are
stable and admit largest ideals
by parts (\ref{P_7809_IsMaxx_TDZ}) and~(\ref{P_7809_IsMaxx_ResidHer})
of Proposition \ref{P_7809_IsMaxx}
and Remark~\ref{R_7809_ResidHer}.
It remains only
to check that they are preserved by the appropriate crossed products.
In part~(\ref{C_7812_Contain_TDZ}),
for topological dimension zero
use Theorem 3.17 of~\cite{PsnPh1}
and for the weak ideal property use Corollary 8.10 of~\cite{PsnPh2}.
In part~(\ref{C_7812_Contain_Up}),
by Remark~\ref{R_7809_ResidHer}
all the properties
have the form
``residually hereditarily in an
upwards directed class of C*-algebras'',
so we can apply Corollary 3.3 of~\cite{PsnPh3}.
\end{proof}


\begin{thebibliography}{33}

\bibitem{Bl0} B.~Blackadar,
{\emph{Symmetries of the CAR algebra}},
Ann.\  Math.~(2) {\textbf{131}}(1990), 589--623.

\bibitem{Bwn} L.~G.\  Brown,
{\emph{Stable isomorphism of hereditary subalgebras of C*-algebras}},
Pacific J.~Math. {\textbf{71}}(1977), 335--348.

\bibitem{Brn12} L.~G.\  Brown,
{\emph{Extensions of AF~algebras: The projection lifting problem}},
pages 175--176 in:
{\emph{Operator Algebras and Applications}},
(R.~V.\  Kadison (ed.)),
Proceedings of Symposia in Pure Mathematics
{\textbf{38}}(1982), part~1.

\bibitem{BnPd0} L.~G.\  Brown and G.~K.\  Pedersen,
{\emph{C*-algebras of real rank zero}},
J.~Funct.\  Anal.\  {\textbf{99}}(1991), 131--149.

\bibitem{BnPd1} L.~G.\  Brown and G.~K.\  Pedersen,
{\emph{Extremal K-theory and index for C*-algebras}},
K-Theory  {\textbf{20}}(2000), 201--241.

\bibitem{BP2} L.~G.\  Brown and G.~K.\  Pedersen,
{\emph{Ideal structure and C*-algebras of low rank}},
Math.\  Scand.\  {\textbf{100}}(2007), 5--33.

\bibitem{BP09} L.~G.\  Brown and G.~K.\  Pedersen,
{\emph{Limits and C*-algebras of low rank or dimension}},
J.~Operator Theory {\textbf{61}}(2009), 381--417.

\bibitem{CP} J.~R. Carri\'{o}n and C.~Pasnicu,
{\emph{Approximations of C*-algebras and the ideal property}},
J.~Math.\  Anal.\  Appl.\  {\textbf{338}}(2008), 925--945.

\bibitem{Ell3} G.~A.\  Elliott, {\emph{A classification of
certain simple C*-algebras}}, pages 373--385 in: {\emph{Quantum and
Non-Commutative Analysis}},
H.~Araki etc.\  (eds.), Kluwer, Dordrecht, 1993.

\bibitem{GtLz2} E.~G.\  Gootman and A.~J.\  Lazar,
{\emph{Applications of noncommutative duality to
crossed product C*-algebras determined by an action or coaction}},
Proc.~London\  Math.\  Soc.\  {\textbf{59}}(1989), 593--624.

\bibitem{GtLz3} E.~G.\  Gootman and A.~J.\  Lazar,
{\emph{Compact group actions on C*-algebras: an application
of noncommutative duality}},
J.~Funct.\  Anal.\  {\textbf{91}}(1990), 237--245.

\bibitem{Iz1} M.~Izumi,
{\emph{Finite group actions on C*-algebras with the
   Rohlin property.~I}},
Duke Math.~J.\  {\textbf{122}}(2004), 233--280.

\bibitem{Kr3} E.~Kirchberg,
{\emph{On permanence properties of strongly purely infinite
  C*-algebras}},
preprint 2003
(Preprintreihe SFB 478--Geometrische Strukturen in der Mathematik,
Heft 284,
Westf\"{a}lische Wilhelmsuniversit\"{a}t M\"{u}nster; ISSN 1435-1188).

\bibitem{KR} E.~Kirchberg and M.~R{\o}rdam,
{\emph{Non-simple purely infinite C*-algebras}},
Amer.\  J.\  Math.\  {\textbf{122}}(2000), 637--666.

\bibitem{KrRd2} E.~Kirchberg and M.~R{\o}rdam,
{\emph{Infinite non-simple C*-algebras: absorbing the Cuntz algebra
${\mathcal{O}}_{\infty}$}},
Adv.\  Math.\  {\textbf{167}}(2002), 195--264.

\bibitem{LnRrd} H.~Lin and M.~R{\o}rdam,
{\emph{Extensions of inductive limits of circle algebras}},
J.~London Math.\  Soc.~(2) {\textbf{51}}(1995), 603--613.

\bibitem{PsnPh1}
C.~Pasnicu and N.~C.\  Phillips,
{\emph{Permanence properties
  for crossed products and fixed point algebras of finite groups}},
Trans.\  Amer.\  Math.\  Soc.\  {\textbf{366}}(2014), 4625--4648.

\bibitem{PsnPh2}
C.~Pasnicu and N.~C.\  Phillips,
{\emph{Crossed products by spectrally free actions}},
J.~Funct.\  Anal.\  {\textbf{269}}(2015), 915--967.

\bibitem{PsnPh3}
C.~Pasnicu and N.~C.\  Phillips,
{\emph{The weak ideal property and topological dimension zero}},
Canadian J.\  Math., to appear.

\bibitem{PhT4} N.~C.\  Phillips,
{\emph{Finite cyclic group actions with the
tracial Rokhlin property}},
Trans.\  Amer.\  Math.\  Soc.\  {\textbf{367}}(2015), 5271--5300.

\bibitem{Rf} M.~A.\  Rieffel,
{\emph{Dimension and stable rank in the K-theory of C*-algebras}},
Proc.\  London Math.\  Soc.~(3) {\textbf{46}}(1983), 301--333.

\bibitem{Rrd0} M.~R{\o}rdam,
{\emph{Advances in the theory of unitary rank and regular
 approximation}},
Ann.\  Math.\  (2) {\textbf{128}}(1988), 153--172.

\bibitem{Rrd} M.~R{\o}rdam,
{\emph{Classification of nuclear, simple C*-algebras}},
pages 1--145 of:
M.~R{\o}rdam and E.~St{\o}rmer,
{\emph{Classification of nuclear C*-algebras. Entropy in operator
algebras}},
Encyclopaedia of Mathematical Sciences vol.\  126,
Springer-Verlag, Berlin, 2002.

\bibitem{Rs} J.~Rosenberg,
{\emph{Appendix to O.~Bratteli's paper on
``Crossed products of UHF algebras''}},
Duke Math.~J.\  {\textbf{46}}(1979), 25--26.

\bibitem{TmWnt1} A.~S.\  Toms and W.~Winter,
{\emph{Strongly self-absorbing C*-algebras}},
Trans.\  Amer.\  Math.\  Soc.\  {\textbf{359}}(2007), 3999--4029.

\bibitem{Wn} W.~Winter,
{\emph{Strongly self-absorbing C*-algebras are ${\mathcal{Z}}$-stable}},
J.~Noncommut.\  Geom.\   {\textbf{5}}(2011), 253--264.

\end{thebibliography}
\end{document}